\numberwithin{equation}{section}
\newtheorem{theorem}{Theorem}[section]
\newtheorem{lemma}[theorem]{Lemma}
\theoremstyle{definition}
\newtheorem{definition}[theorem]{Definition} 
\theoremstyle{remark}
\newtheorem *{remark}{Remark}
\newtheorem *{quest}{Question}
\newtheorem *{warn}{WARNING}
\newcommand\C{{\mathbb C}} 
\newcommand\R{{\mathbb R}} 
\newcommand\Z{{\mathbb Z}} 
\newcommand{\res}{{\bigg|}} 
\renewcommand{\l}{\left(}
\renewcommand{\r}{\right)}
\newcommand{\hrad}{r_{I}}
\newcommand{\cdom}{C^{k-2,\alpha}_{\gamma-2} }
\newcommand{\dom}{C^{k,\alpha}_\gamma}
\newcommand{\wlp}[3]{C^{#1,#2}_{ #3}}
\newcommand{\LL}[1]{L_{#1}}
\begin{document}

\title{The Nirenberg Problem for Conical Singularities}

\author{Lisandra Hernandez-Vazquez}
\address{Stony Brook University}
\email{lisandra@math.stonybrook.edu}
\date{June 1, 2018}

\keywords{}

\begin{abstract}
We propose a new approach to the question of prescribing Gaussian curvature on the 2-sphere with at least three conical singularities and angles less than $2\pi$, the main result being sufficient conditions for a positive function of class at least $C^2$ to be the Gaussian curvature  of such a conformal conical metric on the round sphere. Our methods particularly differ from the variational approach in that they don't rely on the Moser-Trudinger inequality. Along the way, we also prove a general precompactness theorem for compact Riemann surfaces with at least three conical singularities and angles less than 2$\pi$.  
\end{abstract}

\maketitle 

\section{Introduction}
Conical surfaces have been extensively studied by many in different contexts (see \cite{song, ramos, mazzeoricci, chen} for some examples). This class of objects consists of Riemann surfaces $\Sigma$, without boundary, equipped with a Riemannian metric $g$, points $p_1, \dots p_n$ and neighborhoods $U_i$ of each $p_i$ such that 
\begin{enumerate}
\item $g$ is of class $C^k$ on  $\Sigma- \{p_1, \dots, p_n\}$
\item there exist local holomorphic coordinates $z_i: U_i \to \C$ on each $U_i$ and bounded functions $\phi_i(z_i)$ of class $C^k$ such that 
\begin{equation}\label{isothermalcharts}
g = e^{2\phi_i(z_i)}|z_i - z_i(p_i)|^{2\beta_i}|dz_i|^2
\end{equation}
\end{enumerate}
where $\beta_i >-1$ for all $i$. The points $p_1, \dots p_n$ are called \textit{cone points} with cone angles $\theta_i := 2\pi (\beta_i+1)$. If a metric $g$ on $\Sigma$ satisfies the conditions above, we say $g$ is a conical metric representing the divisor  $\beta = \sum_{i=1}^n \beta_ip_i$, and write $(\Sigma, g, \beta)$ for brevity. Many of the usual topological invariants defined for smooth surfaces extend to conical ones. For instance, one defines the \textit{generalized Euler characteristic} for the conical surface $(\Sigma, g, \beta)$ by
$$\chi(\Sigma, \beta) := \chi(\Sigma) + \sum_{i=1}^n \beta_i$$
and can further prove a corresponding Gauss-Bonnet theorem (\cite{troyanov}), which states that if $K_g$ is the Gaussian curvature of a conformal conical metric $g$  (see Section 2 for the definition) and $dA_g$ is its area element, then 
\begin{equation}\label{gauss bonnet}
\int_{\Sigma} K_g dA_g = 2\pi\chi(\Sigma) + 2\pi \sum{\beta_i} = 2\pi\chi(\Sigma, \beta)
\end{equation}
A classical problem for conical surfaces is characterizing those smooth functions which arise as Gaussian curvatures of a pointwise conformal metric. Equivalently, one asks for necessary and sufficient conditions for existence of solutions to the Gaussian curvature equation of a pointwise conformal metric $\tilde{g} = e^{2u}g$,  with $g$  a conical metric representing a given divisor $\beta$. Such equation is given by 
\begin{equation}\label{gaussianeq}
K= e^{-2u}(K_g - \Delta_g u)
\end{equation}
where $K_g$ is the curvature of the metric $g$. In \cite{troyanov}, Troyanov proves several existence and uniqueness results analogous to the ones shown by Kazdan-Warner in \cite{kazdanwarner1} for smooth Riemann surfaces. Since the pioneering work of Troyanov in \cite{troyanov}, where he uses a variational method to find solutions to (\ref{gaussianeq}), several other methods have been employed. These include complex analytic ideas \cite{eremenko}, Ricci flow \cite{mazzeoricci}, minmax theory \cite{carlotto} and recently, synthetic geometric methods when the surface is a sphere \cite{mondello}. In the case of constant curvature, there is a complete existence theory developed over the years \cite{eremenko, mcowen, luotian, troyanov} for conical surfaces with at least three conical singularities and angles less than $2\pi$. In particular, it has been observed by many that a necessary condition for the existence and uniqueness of such conformal conical metric of constant curvature $1$ on $S^2$ is 
\begin{equation} \label{troyanovcondition}
\sum_{i\neq j} \beta_i < \beta _j,  \textit{  for all  } j
\end{equation}

Condition (\ref{troyanovcondition}) is known as the \textit{Troyanov condition} and we refer the reader to \cite{mazzeoweiss} for a geometric interpretation. A particularly delicate case is when the conical surface is a sphere, referred to in the literature as the \textit{singular or conical} Nirenberg problem. The classical  Nirenberg problem asks to characterize those smooth functions on the sphere which arise as the Gaussian curvature of a metric that is pointwise conformal to the round metric of curvature $1$. Even though much progress has been done over the years (see for instance, \cite{changyang, han, ji, ding, changliugursky, changliu}), the problem still remains open in full generality. In passing to the singular case, one now allows metrics in a given conformal class to have conical singularities. While there are some results on this topic in the literature \cite{marchis, troyanov}, many questions remain unanswered. In this work we propose a new approach to the singular Nirenberg problem when there are at least three cone points and the angles are less than $2\pi$. Using our methods we find sufficient conditions for a function $K$ to arise as the Gaussian curvature of a conformal conical metric in this setting. Specifically, we show
\begin{theorem}\label{main}
Suppose $n \geq 3$, and $\beta = \sum_{i=1}^n \beta_i p_i$ is a divisor on $S^2$ satisfying the Troyanov condition (\ref{troyanovcondition}) and there exists $i,j,k$ distinct for which $\beta_i , \beta_j, \beta_k$ are all distinct. Assume $\chi(S^2, \beta)>0$ and let $g_\beta$ be the unique conical metric on $S^2$ representing the divisor $\beta$ of Gaussian curvature $K_\beta = 1$. Then a function $K$ on $S^2$ is the Gaussian curvature of a metric $g$ conformal to $g_\beta$ if $K$ is a positive function in $ \cdom$, $k\geq 2, \alpha\in (0,1)$, where $\gamma = (\gamma_1, \dots, \gamma_n) \in \R^n, \gamma_i \neq \frac{m}{\beta_j}$, $\gamma_i >0$ and $m$ is an integer.  
\end{theorem}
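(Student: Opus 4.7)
The natural strategy is a continuity method in the weighted Hölder spaces $\dom$ and $\cdom$. Since $K_\beta\equiv 1$, writing $\tilde g = e^{2u}g_\beta$ reduces \eqref{gaussianeq} to
\begin{equation}\label{plan:eqn}
\Delta_{g_\beta} u + K e^{2u} = 1.
\end{equation}
I would connect the trivial solution $u=0$ of \eqref{plan:eqn} with datum $K\equiv 1$ to the target by the family $K_t := (1-t)+tK$, $t\in[0,1]$, which stays positive and remains bounded in $\cdom$. Let $S\subset[0,1]$ be the set of parameters for which \eqref{plan:eqn} with $K$ replaced by $K_t$ admits a solution $u_t\in\dom$. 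Then $0\in S$, and it suffices to prove that $S$ is open and closed, whence $1\in S$.

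For openness I would apply the implicit function theorem at each $u_t$. The linearization
\[
L_{u_t} v \;=\; \Delta_{g_\beta} v + 2K_t e^{2u_t}\,v,\qquad L_{u_t}\colon \dom\to\cdom,
\]
is an elliptic b-type operator on a surface with conical ends. The condition $\gamma_i\neq m/\beta_j$, $m\in\Z$, is designed precisely to keep $\gamma$ off the indicial roots of $\Delta_{g_\beta}$ at each conical singularity, so that the standard Fredholm theory for elliptic operators on conical surfaces applies and $L_{u_t}$ has index zero. It then remains to rule out a kernel. The assumption that three of the $\beta_i$ are distinct removes all conformal automorphisms of $(S^2,\beta)$, and together with the hypothesis that every cone angle is strictly less than $2\pi$ this eliminates the obstruction of Kazdan--Warner type; triviality of $\ker L_{u_t}$ should then follow from a weighted spectral argument combined with the positivity $\chi(S^2,\beta)>0$.

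Closedness is where the general precompactness theorem announced in the abstract enters. Given $t_k\to t_\infty$ in $S$ with corresponding solutions $u_{t_k}$, the conformal conical metrics $e^{2u_{t_k}}g_\beta$ have curvatures $K_{t_k}$ that are uniformly bounded from above and below in $\cdom$, and their total areas are fixed by Gauss--Bonnet, since
\[
\int_{S^2} K_{t_k} e^{2u_{t_k}}\,dA_{g_\beta} \;=\; 2\pi\chi(S^2,\beta) \;>\; 0.
\]
The Troyanov condition, the assumption $n\ge3$, and all cone angles being strictly less than $2\pi$ are exactly the hypotheses of that precompactness theorem, which rules out bubbling and metric degeneration. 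Extracting a subsequential limit and invoking weighted elliptic regularity produces $u_\infty\in\dom$ solving \eqref{plan:eqn} for $K_{t_\infty}$, so $t_\infty\in S$ and connectedness of $[0,1]$ finishes the argument.

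The principal obstacle is the closedness step, and within it the precompactness theorem itself: one must carry out a quantitative blow-up analysis at each conical point and show, via the Troyanov condition, that no $4\pi$-bubble of curvature can form at or near a cone with angle less than $2\pi$, while the existence of three distinct values among the $\beta_i$ must be used to exclude the noncompactness that could otherwise arise from a one-parameter family of Möbius rearrangements of the cone points. A secondary but genuine difficulty is invertibility of $L_{u_t}$, since its zero-order coefficient is positive and a bare integration-by-parts argument does not yield triviality of the kernel; this is where the weighted spectral analysis and the hypothesis on the indicial roots must be combined carefully.
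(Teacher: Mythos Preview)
Your overall architecture—connect $K\equiv 1$ to the target $K$ by a path in $\mathcal C_+$ and use precompactness for the closedness step—is close in spirit to the paper, but the specific tool you propose for openness is too weak and creates a genuine gap. The paper does \emph{not} run a continuity method; it proves that the curvature map $\pi_0:\pi^{-1}(\mathcal C_+)\to\mathcal C_+$ is a proper Fredholm map of index zero and then computes its Smale degree to be~$1$. Degree theory is the right instrument here precisely because it requires invertibility of the linearization only at a \emph{single} regular value: the paper takes $K\equiv 1$, where by Luo--Tian the preimage is $\{u=0\}$, and the Lichnerowicz--Obata/Bochner argument (first eigenvalue $\ge 2$, with equality forcing a conformal Killing field, hence $h=0$ since $\mathrm{Conf}(S^2,g_\beta)$ is trivial under the three-distinct-angles hypothesis) yields $\ker D_0\pi_0=0$.

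Your implicit-function step, by contrast, demands that $L_{u_t}=\Delta_{g_\beta}+2K_t e^{2u_t}$ be invertible for \emph{every} $t$ along the path. You flag this as a ``secondary'' difficulty, but it is the main obstruction: the Bochner argument relies on the curvature of the metric being constant. If you rewrite $L_{u_t}$ relative to $g_t=e^{2u_t}g_\beta$ it becomes $e^{2u_t}(\Delta_{g_t}+2K_t)$, and running Bochner on $\Delta_{g_t}h=-2K_t h$ produces an extra term $-2h\langle\nabla K_t,\nabla h\rangle_{g_t}$ that does not vanish and wrecks the inequality. There is no mechanism in your sketch that rules out $\ker L_{u_t}\neq 0$ at some intermediate $t$; moreover a kernel element is only an \emph{infinitesimal} curvature-preserving conformal deformation and need not exponentiate to an element of $\mathrm{Conf}(S^2,g_\beta)$, so triviality of the conformal group does not by itself kill it away from $u=0$. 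Degree theory sidesteps all of this: once properness is established, the degree is constant on the connected target regardless of whether every value is regular.

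A smaller correction: the paper's precompactness theorem does not use the Troyanov condition or any $4\pi$-bubble exclusion. Its hypotheses are uniform bounds on diameter, volume of small balls, and $\|K\|_0$, and the proof is a blow-up argument controlling the \emph{isothermal radius}, with the contradiction coming from the fact that a complete flat conical plane of quadratic area growth has infinite isothermal radius. The Troyanov condition enters only at the very end, to invoke Luo--Tian uniqueness so that $\pi_0^{-1}(1)=\{0\}$ is a single point.
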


The space $\cdom$ consists of H\"{o}lder continuous functions which are $(k-2)$-times differentiable and satisfy growth conditions near the cone points that are determined by the weights $\gamma$ (for a precise definition see Section 2). Following the ideas in recent work of Anderson ~\cite{andersonnirenberg}, our approach is to choose appropriate Banach spaces such that the differential operator defined by (\ref{gaussianeq}) is a proper Fredholm map of index zero. For such operators, there is a well-defined notion of degree, and one can perform a degree count in order to study the surjectivity of this map. In establishing properness, a preliminary step is a compactness theorem for conical surfaces.  Although compactness theorems for conical surfaces have been shown in \cite{debin} for the uniform topology and in ~\cite{ramos}  for the $C^{m,\alpha}$ topology but angles less than $\pi$, to our knowledge no result of the form required for our proof of  Thm \ref{main} exists in the literature. Thus in Section 2, we show

\begin{theorem}\label{compactthm}:
Let $\Sigma$ be a compact Riemann surface without boundary and fix a divisor $\beta= \sum_{j=1}^n \beta_j p_j$ on $\Sigma$ such that  $-1 < \beta_j <0$ for all $j$. If $M_i= (\Sigma, g_i, \beta$) is a sequence of smooth conformal conical metrics on $\Sigma$ representing the divisor $\beta$ such that there exist constants $D_0, \Lambda, v_0>0$ for which 

\begin{enumerate}
\item $diam(M_i) \leq D_0$
\item  there exists $t_0>0$ such that $vol_{g_i}(B(r)) \geq v_0$ for every $r \leq t_0$ 
\item $||K_{g_i}(x)||_{0} \leq \Lambda$ away from the cone points
\end{enumerate}
Then for any $\gamma\in \R^n$, there exists a subsequence of $(g_i)$, $\wlp{2}{\alpha}{}$ diffeomorphisms $F_i:\Sigma \to \Sigma$ and a $\wlp{1}{\alpha}{\gamma}$ conformal conical metric $g$ representing a divisor $\beta'$ such that 
\begin{equation}
||(F^*_i g_i)_{st}- g_{st}||_{1, \alpha; \gamma} \to 0
\end{equation}
as $i\to \infty$, where  $\beta' = \sum_{j=1}^m \beta_j' q_j$ with $-1<\beta'_j \leq 0$, $m\leq n$.
\end{theorem}

This result, however, only guarantees control of the metrics $g=e^{2u_i}g_\beta$ in a weighted $C^{m,\alpha}$ topology modulo diffeomorphisms. In order to obtain properness of the differential operator defined by (1.3), one needs to ensure the conformal factors $u_i$ themselves converge on a subsequence. As in \cite{andersonnirenberg}, one can further show that also in the conical case we actually have control of the metrics modulo the conformal group of $g_\beta$, where Astala's theorem (\cite{astala}) plays a central role in this step. To obtain that the conformal factors $u_i$ themselves converge,  we rely on the fact that a conformal conical metric on a sphere with at least three cone points has compact conformal group (see Thm \ref{finiteconformal}). 

\subsection*{Acknowledgements} I would first like to thank my advisor Michael Anderson for all his help and advice in the completion of this project. I would also like to thank Dror Varolin and Frederik Benirschke for their time in discussing aspects of this work with me.

 \section{Compactness Theorems For Conical Surfaces }

As mentioned in the introduction, a $C^k$ ($k\geq 2$) conformal conical metric has an associated curvature function defined on the complement of the cone points. In fact, using (\ref{isothermalcharts}), we can explicitly compute the Gaussian curvature in a neighborhood of any given point as stated in the following lemma.
\begin{lemma}\label{gaussiancurvature}
Let $\Sigma$ be a compact Riemann surface and $g$ a conformal conical metric representing the divisor $\beta$. If $z$ is a holomorphic coordinate in a neighborhood of $p \in \Sigma$ such that $z(p) = 0$ and the conformal conical metric is given locally by
$g = e^{2u}|z|^{2\beta}|dz|^2$, then the Gaussian curvature $K$ of g satisfies 
\begin{equation}\label{gaussiancone}
K = -\dfrac{e^{-2u}\Delta u}{|z|^{2\beta}}
\end{equation}
for $z\neq 0$, where $\Delta$ is the Euclidean laplacian. 
\end{lemma}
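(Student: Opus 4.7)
The plan is to reduce the statement to the standard Gaussian curvature formula for a smooth conformal metric and then exploit the harmonicity of $\log|z|$ on the punctured disk.

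First I would rewrite the conical metric in the form of an ordinary conformal metric away from the cone point. Since $z \neq 0$, we may set
\begin{equation*}
\phi(z) = u(z) + \beta \log |z|,
\end{equation*}
so that $g = e^{2\phi(z)} |dz|^2$ is a smooth conformal metric on the punctured neighborhood $U \setminus \{0\}$. On this punctured domain, the well-known formula for the Gaussian curvature of a conformal metric $e^{2\phi}|dz|^2$ in an isothermal coordinate gives
\begin{equation*}
K = -e^{-2\phi} \Delta \phi,
\end{equation*}
where $\Delta$ is the Euclidean Laplacian in $z$. This formula is obtained directly from the definition of curvature via the conformal factor and requires no conical structure.

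Next I would compute $\Delta \phi$ using linearity:
\begin{equation*}
\Delta \phi = \Delta u + \beta\, \Delta \log |z|.
\end{equation*}
The key observation is that $\log |z| = \tfrac{1}{2}\log(x^2 + y^2)$ is the real part of a holomorphic function on $\C \setminus \{0\}$, hence harmonic there, so $\Delta \log|z| = 0$ on the punctured neighborhood. Therefore $\Delta \phi = \Delta u$ for $z \neq 0$.

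Finally, substituting $\phi = u + \beta \log|z|$ and $\Delta \phi = \Delta u$ into the curvature formula yields
\begin{equation*}
K = -e^{-2u - 2\beta \log|z|} \Delta u = -\frac{e^{-2u} \Delta u}{|z|^{2\beta}},
\end{equation*}
as claimed. There is no real obstacle here: the only point that warrants care is restricting the computation to $z \neq 0$ so that $\log|z|$ is harmonic and the standard conformal curvature formula applies.
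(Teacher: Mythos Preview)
Your proof is correct and is exactly the standard computation one would expect. The paper in fact states this lemma without proof, treating it as an elementary consequence of the classical formula $K = -e^{-2\phi}\Delta\phi$ for a conformal metric $e^{2\phi}|dz|^2$; your write-up supplies precisely that missing verification. One tiny remark: $\log|z|$ is not globally the real part of a holomorphic function on $\C\setminus\{0\}$ (only locally, since $\log z$ requires a branch cut), but its harmonicity on the punctured plane can be checked directly or deduced from the local statement, so the conclusion stands.
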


A natural question is whether there is an associated compactness or precompactness theorem for the class of conical Riemann surfaces with curvature, diameter and volume bounds. We address this question following the ideas in the classical work of Anderson (\cite{1anderson1}, see also \cite{AnCh}). To begin, a chart $H$ will be referred to as an \textit{isothermal chart} if there exists a holomorphic coordinate $z$ such that 

 \begin{equation}
  (H^{-1})^*g = e^{2\phi(z)}\prod_{i=1}^m |z- z^j|^{2\alpha_j}|dz|^2
 \end{equation}
 Observe that the existence of such charts is guaranteed by the uniformization theorem. 
 
 In order to directly apply our results in Section 3, we will work with weighted H\"{o}lder spaces, which we define here as follows. To set things up, we first introduce a slightly different but equivalent definition of a conical surface as the one given in the introduction. Let $\Sigma$ be a compact Riemann surface without boundary and fix a $C^k$,  $k\geq 2$, metric $g_0$ on $\Sigma$, points $p_1, \dots p_n$. Define $\Sigma^* = \Sigma - \{p_1, \dots p_n\}$. For $R>0$ small enough, let $B_R(p)$ denote a geodesic ball in $(\Sigma, g_0)$ of radius $R$ centered at $p$. Set $ \Sigma_R := \Sigma - \cup_{i=1}^n \bar{B}_R(p_i)$, where $\bar{B}_R(p_i)$ denotes the closure of $\bar{B}_R(p_i)$ in $(\Sigma, g_0)$. 

\begin{definition}\label{radiusfunction}
A smooth function $\rho:  \Sigma^* \to (0, 1]$ is a \textit{radius function} on $( \Sigma^*, g_0)$, if for $R>0$ small enough, $\rho \equiv 1$ on $ \Sigma_R$ and for each $i=1, \dots n$, there exist holomorphic coordinates $z_i$ on $B_R(p_i)$ such that $\rho(z_i) = O(|z_i|)$ on $B_R(p_i)$.

We further define $\rho^\gamma$ for $\gamma = (\gamma_1, \dots, \gamma_n)$ in $\R^n$ as follows 
\begin{align*}
\rho^\gamma = \rho^{\gamma_i} & \text{ on } B_R(p_i)\\
\rho^\gamma \equiv 1 & \text{ otherwise }
\end{align*}
Moreover, for $a \in \R$, set $\gamma + a := (\gamma_1 +a, \dots \gamma_n +a)$. We say $\gamma \leq \delta$, $\gamma, \delta \in \R^n$ provided that $\gamma_i \leq \delta_i$ for each $i = 1, \dots n$. 
\end{definition}

\begin{definition}\label{conicalconformaldef}
We say $g_\beta$ is a \textit{conical metric} on $(\Sigma, g_0)$ of class $C^k$ ($k\geq 2$) representing the divisor $\beta = \sum_{i=1}^n \beta_i p_i$ if $g_0$ is of class $C^k$ and there exists a smooth radius function $\rho: \Sigma^*\to (0,1]$ such that
\begin{equation}
g_\beta = \rho^{2\beta} g_0
\end{equation}

Moreover, $g$ is called a \textit{conformal conical metric} on $(\Sigma, g_0)$ of class $C^k$ representing the divisor $\beta$ if there exists a smooth positive function $u: \Sigma \to \R$ such that 
\begin{equation}
g = e^{2u}g_\beta
\end{equation}
where $g_\beta$ is a conical metric on $\Sigma$ of class $C^k$ representing the divisor $\beta$. 
\end{definition}
For $g_\beta$ a conical metric on $(\Sigma, g_0)$ representing a divisor $\beta$ and $\rho$ a radius function on $(\Sigma^*, g_0)$ such that $g_\beta = \rho^{2\beta}g_0$,  if $u \in C^k_{loc} (\Sigma^*)$ and $\gamma \in \R^n$, set 
\begin{equation}
||u||_{C^k_\gamma} := \sum_{j=0}^k \sup_{x \in \Sigma^*} |\rho(x)^{-\gamma+j} \nabla^j u(x)|
\end{equation}
Define the space of $\wlp{k}{\alpha}{\gamma}(\Sigma, \beta)$ functions on $(\Sigma, \beta)$ to be 
\begin{equation}
\wlp{k}{\alpha}{\gamma}(\Sigma, \beta) = \bigg\{ u\in C^{k}_{loc}(\Sigma^*)  : ||u||_{k, \alpha; \gamma}< \infty \bigg\}
\end{equation}
 where the norm $||\cdot||_{k, \alpha; \gamma}$ is given by 
\begin{equation}
||u||_{k, \alpha; \gamma} := ||u||_{C^{k, \alpha}_{\gamma}} = ||u||_{C^k_\gamma} + [\nabla^k u]_{\alpha, \gamma-k}
\end{equation}
and
\begin{equation}
[\nabla^k u]_{\alpha; \gamma} = \sup_{x\neq y, d(x,y) < inj(x)} \min(\rho(x)^{-\gamma}, \rho(y)^{-\gamma} )\dfrac{|\nabla^ku(x) - \nabla^ku(y)|}{d(x,y)^\alpha}
\end{equation}

It is well known that the normed spaces $(\wlp{k}{\alpha}{\gamma}, ||\cdot||_{k,\alpha; \gamma})$ are Banach for any $\gamma$ (see for instance \cite{pacard}). 
One can further define the weighted Sobolev spaces $W^{k,p}_\gamma$, as in \cite{behr}. Next, we introduce the notion of the \textit{isothermal radius}, which plays the same role as that of the harmonic radius in \cite{1anderson1, AnCh}.
\begin{definition}\label{defharmradius}\textit{The Isothermal Radius:}
Let $(\Sigma, g, \beta)$ be a complete Riemann surface without boundary with $g$ a conformal conical metric on $\Sigma$ representing the divisor $\beta = \sum_{i=1}^n \beta_ip_i$.  Let $x\in \Sigma$. Given a constant $C>1$, $\alpha\in (0,1), \gamma\in \R^n$, we define the isothermal radius  $\hrad =  r_{I}(g, x, C, \alpha, \gamma) $ as the largest number such that on the geodesic ball $B(x, r_{I}(x))$ there exists an isothermal coordinate chart $H: B(x, r_{I}(x)) \to B_0(R) \subset \C$ with 
 
 \begin{equation}\label{isothermal}
  (H^{-1})^*g = e^{2\phi(z)}\prod_{i=1}^m |z- z^j|^{2\alpha_j}|dz|^2
 \end{equation}
where $z$ is a holomorphic coordinate on $B_0(R)$, $z^j = H(p_j)$ correspond to  cone points, $1 \leq m \leq n$ , $-1< \alpha_j$ and $\phi(z): B_0(R) \to \R$ is smooth and satisfies
 \begin{enumerate}  
\item[A1.] $\dfrac{1}{C}\leq \phi(z) \leq C$ 

\item[B1.] $\sum_{0\leq |\mu| \leq 1} \hrad^{-\gamma +|\mu|} \sup_x|\partial^\mu \phi(x)|+ \sum_{|\mu|=1} \hrad^{-\gamma +\alpha +1 } \sup_{x\neq y}\frac{|\partial^\mu \phi(x) - \partial^\mu \phi(y)|}{d(x,y)^\alpha} \leq C -1 $
\end{enumerate}

We also define the isothermal radius of $(\Sigma, \beta, g)$ by 
\begin{equation}
\hrad(\Sigma) = \inf_{x\in \Sigma } \hrad(x)
\end{equation}
\end{definition}

Observe that $\alpha_j$ either coincide with the $\beta_j$ or are zero in neighborhoods with no conical points. In the following lemma we prove properties of the isothermal radius that will be needed for the upcoming blow-up argument in the proof of Theorem \ref{compactthm}. All of these facts are true for the harmonic radius and the proofs presented here are essentially the same as in the works of \cite{1anderson1, AnCh, hebey}, only slightly modified to fit our setting. 

\begin{lemma} \label{propertiesisorad}
Let $(\Sigma, \beta, g)$ be a Riemann surface with a conformal conical metric $g$ representing the divisor $\beta$ and let  $\hrad: \Sigma \to \R$ be the  isothermal radius. Then the following hold 
\begin{enumerate}
\item $\hrad(x)$ is positive and pointwise continuous on $\Sigma$
\item If $F: (\Sigma,g) \to (\Sigma', g')$ is an isometry, then $$F^*\hrad{(\Sigma')} = \hrad{(\Sigma)}$$
\item $\hrad$ scales as the distance: $\hrad(\lambda^2 g, x) = \lambda \hrad(g, x)$
\end{enumerate}
\end{lemma}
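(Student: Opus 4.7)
The plan is to verify the three properties in the order (3), (2), (1), since the scaling and isometry invariance both follow from straightforward functorial constructions on isothermal charts and then feed into the proof of (1), whose non-trivial content is continuity.

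For the scaling (3), I would start with an isothermal chart $H: B_g(x, r) \to B_0(R) \subset \C$ realizing $r = \hrad(g, x, C, \alpha, \gamma)$, producing a representation
\[ (H^{-1})^*g = e^{2\phi(z)}\prod_j |z - z^j|^{2\alpha_j}|dz|^2 \]
that satisfies A1 and B1. Post-composing with a dilation $\tilde H = \mu H$ yields
\[ (\tilde H^{-1})^*(\lambda^2 g) = \lambda^2 \mu^{-2-2\sum_j \alpha_j}\, e^{2\phi(y/\mu)} \prod_j |y - \mu z^j|^{2\alpha_j}|dy|^2, \]
and one selects $\mu$ so that the prefactor is absorbed into a new conformal factor $\tilde\phi$ still obeying A1. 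Since distances under $\lambda^2 g$ are $\lambda$ times those under $g$, the underlying geodesic ball has new radius $\lambda r$; a direct computation tracking the scaling of $\tilde\phi$, its derivatives, and the weight $(\lambda r)^{-\gamma + |\mu|}$ verifies that B1 also transfers. This gives $\hrad(\lambda^2 g, x) \geq \lambda \hrad(g, x)$, and the reverse inequality follows by the same argument applied with $\lambda^{-1}$. For (2), if $F: (\Sigma, g)\to(\Sigma', g')$ is an isometry and $H'$ realizes $\hrad(g', F(x))$, then $H' \circ F$ is an isothermal chart for $g$ on $B_g(x, \hrad(g', F(x)))$ with identical representation, because $F$ preserves geodesic distances and pulls back the representation of $g'$ to that of $g$; conditions A1 and B1 are intrinsic to the pulled-back representation and transfer verbatim, and the same reasoning applied to $F^{-1}$ yields the reverse inequality.

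For (1), positivity follows from the uniformization theorem: at a regular point $x$ one has holomorphic coordinates in which $g = e^{2\phi(z)}|dz|^2$ with $\phi \in C^{k,\alpha}$, and after a dilation of the type used in (3) conditions A1 and B1 hold on a sufficiently small ball. At a cone point, the explicit representation from Definition \ref{conicalconformaldef} (equivalently, (\ref{isothermalcharts})) supplies the required chart with $m = 1$ after the same dilation. Continuity I would prove via the Lipschitz-type bound $|\hrad(x) - \hrad(y)| \leq d_g(x, y)$: restricting any chart realizing $\hrad(x)$ to the subball $B(y, \hrad(x) - d_g(x, y)) \subset B(x, \hrad(x))$ gives an isothermal chart for $g$ at $y$, and A1 is preserved as a supremum bound. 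The main obstacle is that B1 involves $\hrad$ itself as a weight, so replacing $\hrad(x)$ by the smaller value $\hrad(x) - d_g(x, y)$ perturbs the weighted seminorms; I would resolve this as in the classical harmonic-radius arguments of Anderson \cite{1anderson1} and Hebey \cite{hebey}, by observing that A1 and B1 hold with a small amount of slack at the extremal chart (since $\hrad$ is defined as a supremum, so the realizing chart is obtained by a limit argument and we may pass to one with strict inequalities) and that the weights depend continuously on the radius, so the slack absorbs the perturbation for $y$ sufficiently close to $x$. The scaling property (3) further allows one to normalize to $\hrad = 1$, reducing the verification to a fixed model situation.
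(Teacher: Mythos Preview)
Your approach is essentially the same as the paper's: both prove (3) via coordinate dilation $w = \lambda z$, prove (2) by pulling back isothermal charts along $F$ and $F^{-1}$, and prove positivity in (1) from the existence of local isothermal charts (uniformization at smooth points, the defining form at cone points) together with continuity via restriction of a chart on $B(x,\hrad(x))$ to a smaller concentric ball. Your reordering $(3),(2),(1)$ is cosmetic.

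Two minor remarks on the continuity step. First, you correctly flag a subtlety the paper glosses over: condition B1 carries the radius as a weight $\hrad^{-\gamma+|\mu|}$, so restriction to a smaller ball perturbs the weighted seminorm and one does need the continuity-of-weights / slack argument you sketch (the paper simply asserts B1 persists under restriction). Second, there is one detail the paper makes explicit that you do not: when restricting to a subball one must ensure the cone points $z^j$ appearing in the product stay inside the new chart domain, since otherwise the factor $|z-z^j|^{2\alpha_j}$ would have to be absorbed into $\phi$, potentially wrecking A1. The paper handles this by choosing the subball to contain all cone points of the original chart; in your formulation this is automatic once $d_g(x,y)$ is small enough, because the cone points lie in the open ball $B(x,\hrad(x))$ and hence eventually in $B(y,\hrad(x)-d_g(x,y))$, but it is worth saying.
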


\begin{proof}
Proof of (1). 
For the positivity, observe that for any $x \in \Sigma$ we can find a $0 \geq\mu > -1$, a neighborhood $U $ of $x$  and a holomorphic coordinate $z: U \to \C$ such that 
$g = e^{2\phi} |z|^{2\mu} |dz|^2$ for some smooth $\phi(z): U \to \R$.
Clearly, if $x$ is any of the cone points, such coordinates exist by definition once we choose $\mu = \frac{\theta}{2\pi}$ where $\theta$ is the cone angle. If on the other hand $x$ is a smooth point, then such coordinates are guaranteed by the Uniformization Theorem: we can find a neighborhood $U$ of $x$ and a holomorphic coordinate $z: U \to \C$  such that
$ g = e^{2\phi} |dz|^2$. Finally observe that conditions $A1, B1$ always hold on a fixed conical surface in a given isothermal chart.

For the continuity, given any $x$ close enough to $y$, we can find a ball of radius $a$ centered at $x$ which contains all the cone points in $B(y, \hrad(y))$ and is contained in $B(y, \hrad(y))$. Therefore if $z$ is a holomorphic coordinate in $B(y, \hrad(y))$ such that 
$$  g = e^{2\phi(z)}\prod_{i=1}^m |z- z_i|^{2\alpha_i}|dz|^2$$
with $\phi(z)$ satisfying conditions $A1,B1$ of Definition \ref{defharmradius}, by restricting the coordinates $z$ to $B(x, a)$, we get a holomorphic coordinate on this ball such that in this coordinate the same function $\phi(z)$ still satisfies conditions $A1,B1$ for the same $C,\alpha, \gamma$. Therefore $a \leq \hrad(x)$. Using this one can directly show 
$$|\hrad(y)-\hrad(x)| = |a + d(x,y) - \hrad(x)|\leq \epsilon$$
as wanted. 

Proof of (2). We will actually prove something stronger than the statement of (2): computing the isothermal radius with respect to $g'$ at a point $F(x)$ gives the same result as computing the isothermal radius with respect to $g$ at the point $x$. To begin, fix $x\in \Sigma$ and let $H':B(F(x), \hrad{F(x)}) \to B_0(R) $ be an isothermal coordinate chart where 
\begin{equation}
(H'^{-1})^*g' = e^{2\phi(z)}\prod_{i=1}^m |z- z_i|^{2\alpha_i}|dz|^2
\end{equation}
and $\phi(z)$ satisfies conditions $A1, B1$ of Definition \ref{defharmradius}. The set $U = F^{-1} (B(F(x), \hrad{F(x)}))$ is open in $\Sigma$ and contains $x$. If $B(x, R)$ is the largest geodesic ball centered at $x$ which is still contained in $U$, then $H:= H' \circ F: B(x, R) \to B_0(R)$  is an isothermal coordinate chart for the metric $g$ on $B(x, R)$.  Moreover, the metric $g$ on the ball $B(x, R)$ has the form (\ref{isothermal}) above, where the conformal factor can be computed to be $2\phi(F(z)) + ||DF||^2$ and satisfies the bounds $A1, B1$ in these coordinates since $||DF||_{(F^{-1})^*g}=1$. We have thus found a ball centered at $x$ in which all the conditions of the definition of the isothermal radius are satisfied, so we conclude that its radius $R \leq \hrad(x)$. In particular, $\hrad  (F(x),g' )\leq \hrad (x, g)$. To obtain the opposite inequality, we just follow the same argument with $F^{-1}$ in place of $F$.\\
Proof of (3). We want to show that $\hrad(\lambda^2g, x) = \lambda \hrad(g, x)$ for any nonzero $\lambda$. Start with a conical metric $g$ and after having chosen a holomorphic coordinate for which we can write the metric $g$ as in (\ref{isothermal}) of Definition \ref{defharmradius}, with $\phi(z)$ satisfying the bounds $A1, B1$, it is straightforward to check that the metric $\lambda^2g$ satisfies the same bounds in the coordinates $w = \lambda z$. 
\end{proof}

We now turn to the question of convergence.
For $k \geq 2, \alpha \in (0,1), \gamma \in \R^n$, we say a conformal conical metric $g$ on $(\Sigma, g_0)$ representing the divisor $\beta$ is of class $\wlp{k}{\alpha}{\gamma}$ if in an isothermal chart the coefficients $g_{ij}$ of $g$ are bounded in $\wlp{k}{\alpha}{\gamma}$. Moreover, a sequence of conformal conical metrics $(\Sigma, g_i, \beta)$ of class $\wlp{k}{\alpha}{\gamma}$ converges in $\wlp{k}{\alpha}{\gamma}$ to a  surface $(\Sigma', g)$ provided that there exists a sequence of  $\wlp{k+1}{\alpha}{}$ diffeomorphisms $F_i: \Sigma' \to \Sigma$ such that for all $i$ large enough 
\begin{equation}\label{weightedconvergence}
||(F^*_ig_i) - g||_{k,\alpha; \gamma} \to 0
\end{equation}
in any chart of a $C^\infty$ subatlas of the complete $C^\infty$ atlas of $\Sigma$.  The following lemma addresses the continuity of the isothermal radius in the $C^{1,\alpha}_{\gamma}$ topology. 

\begin{lemma}\label{uppersemicont}
For $\alpha \in (0,1)$, $\gamma \in \R^n$, the  isothermal radius is continuous under $\wlp{1}{\alpha}{\gamma}$ convergence of a sequence of conical metrics $(\Sigma, g_i, \beta)$ representing the divisor $\beta$.
\end{lemma}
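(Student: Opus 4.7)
The plan is to prove both upper and lower semicontinuity of $\hrad$ under $\wlp{1}{\alpha}{\gamma}$ convergence. Using the $\wlp{2}{\alpha}{}$ diffeomorphisms $F_i$ from the definition of convergence together with Lemma \ref{propertiesisorad}(2), it suffices to work on a fixed conical surface $(\Sigma, \beta)$ with a common radius function $\rho$ and metrics $g_i \to g$ in $\wlp{1}{\alpha}{\gamma}$. The crucial structural observation is that on the Riemann surface $\Sigma$ any holomorphic coordinate chart is automatically isothermal for every conformal conical metric representing $\beta$: each such metric then takes the form \eqref{isothermal} with the same cone locations $z^j$ and the same exponents $\alpha_j = \beta_j$, and only the smooth factor $\phi$ varies. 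Consequently, if $H$ is an isothermal chart for $g$, then $(H^{-1})^* g_i$ is also in isothermal form, and its conformal factor $\phi_i$ converges to the conformal factor $\phi$ of $g$ in $\wlp{1}{\alpha}{\gamma}$ on compact subsets of the chart.

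For lower semicontinuity, fix $r'' < r' < \hrad(g, x)$ and an isothermal chart $H$ for $g$ on $B_g(x, r')$ with $\phi$ satisfying $A1, B1$. Since the sup- and H\"{o}lder-type quantities in $A1, B1$ are nondecreasing in the ball radius, the same bounds hold with some slack on the smaller ball $B_g(x, r'')$, and the $\wlp{1}{\alpha}{\gamma}$ convergence $\phi_i \to \phi$ then preserves the non-strict bounds $A1, B1$ for $i$ large. Combined with $C^0$ convergence of the metrics, which gives $B_{g_i}(x, r_i'') \subset B_g(x, r'')$ for some $r_i'' \to r''$, this forces $\hrad(g_i, x) \geq r_i''$, and letting $r'' \nearrow \hrad(g, x)$ yields $\liminf \hrad(g_i, x) \geq \hrad(g, x)$. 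For upper semicontinuity, suppose along a subsequence $\hrad(g_i, x) \to L > \hrad(g, x)$; pick normalized isothermal charts $H_i$ for $g_i$ on $B_{g_i}(x, L)$ (say $H_i(x) = 0$ with $dH_i|_x$ sending a fixed $g$-orthonormal frame to a fixed basis of $\C$), use the $A1, B1$ bounds on $\phi_i$ together with the uniform $\wlp{1}{\alpha}{\gamma}$ control on $g_i$ to obtain uniform $\wlp{1}{\alpha}{\gamma}$ bounds on $H_i$ and $H_i^{-1}$ on relatively compact subsets of $B_g(x, L)$, and extract a subsequential limit $H$ by Arzel\`a-Ascoli. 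The $\wlp{1}{\alpha}{\gamma}$ convergence of $g_i$ to $g$ then shows $H$ is an isothermal chart for $g$ on a $g$-ball of radius $L$ whose conformal factor still obeys $A1, B1$, a contradiction to $\hrad(g,x) < L$.

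The main technical obstacle is verifying that $\wlp{1}{\alpha}{\gamma}$ convergence of the metrics really does translate to $\wlp{1}{\alpha}{\gamma}$ convergence of the smooth conformal factors $\phi_i$, especially near the cone points, where the weight $\rho^\gamma$ and the conical factor $|z - z^j|^{2\alpha_j}$ interact nontrivially; this requires careful tracking of how the radius function $\rho$ transforms under holomorphic changes of coordinates between isothermal charts centered at smooth points and those centered at cone points. Once this compatibility is established, the remainder of the argument reduces to a standard normal families compactness together with the monotonicity of the defining quantities in the ball radius.
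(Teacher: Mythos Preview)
Your approach is essentially correct and takes a somewhat different, more elementary route than the paper. The key simplification you exploit --- that all conformal conical metrics representing $\beta$ on the fixed Riemann surface $\Sigma$ share the same holomorphic (hence isothermal) charts --- is valid and lets you avoid the elliptic PDE machinery the paper invokes. In the paper, chart compactness for the upper-semicontinuity direction is obtained by observing that isothermal coordinate functions are \emph{harmonic} with respect to $g_i$ (the equation $(g_i)^{st}\partial_s\partial_t H_i^k = (g_i)^{st}(\Gamma_i)^l_{st}\partial_l H_i^k$) and then applying interior Schauder estimates to bootstrap the $C^1$ bounds coming from $A1$ up to $C^{2,\alpha}$; Arzel\`a--Ascoli then yields a limiting chart. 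Your argument can (and should) replace this with Montel's theorem: since each $H_i$ is holomorphic in any fixed background holomorphic coordinate and $A1$ gives uniform two-sided Lipschitz control, normal families furnishes subsequential $C^\infty_{loc}$ convergence, and Hurwitz guarantees the limit is still injective. This is cleaner, though the paper's PDE argument is more robust in that it would survive if the conformal structure were allowed to vary along the sequence. For the lower-semicontinuity direction the paper reaches the same conclusion you do --- that a chart for $g$ is also isothermal for each $g_i$ --- but via a uniqueness argument for the Dirichlet problem $\Delta_i w_i = 0$, $w_i|_{\partial B} = z$, rather than your direct conformal-class observation.

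Two points need tightening. First, your ``slack on a smaller ball'' step is not justified as written: the $B1$ quantity carries factors $r_I^{-\gamma+|\mu|}$, which are not monotone in the ball radius for general $\gamma$, so shrinking the ball need not produce slack. The paper sidesteps this by proving the slightly weaker pair of inequalities $\hrad(g,x,C)\ge\limsup_i\hrad(g_i,x,C)$ and $\hrad(g,x,C-\epsilon)\le\liminf_i\hrad(g_i,x,C)$ for every $\epsilon>0$, together with the right-continuity $\lim_{\epsilon\to 0^+}\hrad(C+\epsilon)=\hrad(C)$; this weaker form is all that is used downstream in the blow-up argument. Second, your stated source of $\wlp{1}{\alpha}{\gamma}$ bounds on $H_i$ --- ``the $A1,B1$ bounds on $\phi_i$ together with the uniform control on $g_i$'' --- is not a mechanism on its own: without invoking either holomorphicity or the harmonic equation there is no obvious way to transfer $C^{1,\alpha}$ control on the conformal factors $\phi_i$ to $C^{1,\alpha}$ control on the charts themselves. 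You should make explicit that this step is where normal families for holomorphic maps does the work.
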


\begin{proof}
Let $(\Sigma, g_i, \beta)$ be a sequence of conformal conical metrics on $\Sigma$ representing the divisor $\beta$ and $x \in \Sigma$. As before, $\Sigma^* = \Sigma - \{p_1, \dots p_n\}$, where $p_1, \dots p_n$ are the cone points. Fix $\alpha \in (0,1), \gamma \in \R^n$. Assume that the sequence $(g_i)$ converges in $\wlp{1}{\alpha}{\gamma}$ to a $\wlp{1}{\alpha}{\gamma}$ metric $g$ on $\Sigma$. By the continuity of the isothermal radius we mean explicitly that the following two inequalities hold: given $C>1$
\begin{equation}\label{in1}
\hrad(g, x, C) \geq \limsup_{i\to \infty} \hrad(g_i, x, C)
\end{equation}
and for any $0< \epsilon < C-1$
\begin{equation}\label{in2}
\hrad(g, x, C-\epsilon) \leq \liminf_{i \to \infty} \hrad(g_i, x, C)
\end{equation}

For simplicity, set $r_i := \hrad(g_i, x, C)$ and let $H_i: B(x,r_i) \to \C$, $H_i(y) = (H_i^1(y), H_i^2(y))= z_i \in \C$ be isothermal coordinate charts in which the metrics $g_i$ satisfy $A1, B1$ of Definition \ref{defharmradius}. 

We begin with some preliminary claims, the first being that for any $r \leq \limsup r_i$, a subsequence of the isothermal charts $H_i$ converges in $C^{2,\alpha}$ to an isothermal chart $H: B(x, r) \to \C$, where $B(x,r)$ is a geodesic ball for the metric $g$. To this end, suppose $ (x_1(y), x_2(y)) \in \C$ is any given local coordinate chart on $B(x, r)$. Observe that it is implicit in the fact that the charts $H_i$ are isothermal that the coordinate functions $H_i^k$, for $k=1,2$ are harmonic. In other words, 

\begin{align}\label{eq1}
(g_i)^{st} \frac{\partial^2 H_i^k}{\partial x_s \partial x_t}&= (g_i)^{st} (\Gamma_i)^l_{st}\frac{\partial H^k_i}{\partial x_l}
\end{align}
where $(g_i)^{st}$ are the components of $g_i$ in the coordinates $(x_1, x_2)$ and $(\Gamma_i)^l_{st}$ are the Christoffel symbols for $g_i$ in these coordinates. Now, condition $A1$ of Definition (\ref{defharmradius}) implies that in the charts $H_i$, the components of the metrics $g_i$ satisfy 
\begin{align}\label{A1bounds}
\frac{1}{C} \prod_{j=1}^{m_i} |z_i - z_i^j|^{\alpha_j} \delta_{kl}\leq (g_i)_{kl} \leq C \prod_{j=1}^{m_i} |z_i - z_i^j|^{\alpha_j} \delta_{kl}
\end{align} 
where we write $z_i = H_i^1+ i H_i^2$ and the inequality holds as bilinear forms. It then follows from (\ref{A1bounds}) and the fact that the metrics converge in $C^{1,\alpha}_{\gamma}$ that the charts $H_i$ are bounded in $C^1$ on $\Sigma^*$. Using standard elliptic estimates for  (\ref{eq1})  (see \cite{gt}, for instance), we obtain that for each $k=1, 2$, the sequence $(H_i^k)$ is bounded in $\wlp{2}{\alpha}{}(\Sigma^*)$. Therefore, by the Arzela-Ascoli theorem, we have that for each $k=1, 2$, the sequences $(H^k_i)$ converge weakly on a subsequence in $C^{2,\alpha'}$ on $\Sigma^*$ for $\alpha' \leq \alpha$. In fact, repeating this argument for  $H_q^k - H_n^k$  in place of $H_i^k$ one can see that for each $k = 1,2$, the sequences $(H_i^k)$ are in fact Cauchy. Therefore, they converge strongly on a subsequence in $C_{loc}^{2,\alpha}$ to a limiting map $H: B(x,r)- \{p_1, \dots p_m\} \to \C$, where $H_i(p_j) = z^j$ for all $i$. Since the property of being an isothermal chart is preserved under $\wlp{2}{\alpha}{}$ convergence, $H$ is an isothermal chart for the metric $g$. 
Moreover, since there are finitely many cone points, by passing to a subsequence, we may assume $m_i$ in (\ref{A1bounds}) is independent of $i$ and $z_i^j= z^j$ for all $i$. Therefore, $g$ can be written in the chart given by $H$ as

\begin{equation}
(H^{-1})^*g = e^{2\phi(z)} \prod_{j=1}^{m} |z- z^j|^{\alpha_j}|dz|^2
\end{equation}
where $\phi(z)$ is a smooth function that satisfies conditions $A1, B1$ of Definition \ref{defharmradius}. Hence for any $r \leq \limsup r_i$, a subsequence of the isothermal charts $H_i$ converges in $\wlp{2}{\alpha}{}$ to an isothermal chart $H: B(x, r) \to \C$ for the metric $g$. Observe that this argument also shows that if a sequence of conformal conical metrics representing a fixed divisor $\beta$ converges in $\wlp{1}{\alpha}{\gamma}$ to a metric $g$, then $g$ has at most as many cone points as the sequence $g_i$ and no other types of singularities. 

Our second preliminary claim is that if $(\Sigma, g, \beta)$ is any complete Riemann surface without boundary with a conformal conical metric $g$ representing the divisor $\beta$, $x\in \Sigma$, $\gamma \in \R^n$,  then for any $1 \leq C'  \leq C < \infty$,
\begin{equation}\label{3.9}
\hrad(C')(x) \leq \hrad(C)(x)
\end{equation}
and for any $C>1$

\begin{equation}\label{3.10}
\lim_{\epsilon\to 0^+} \hrad(C+ \epsilon)(x) = \hrad(C)(x)
\end{equation}

The first inequality follows from the definition, hence to prove the claim, it's enough to show that for any $C>1$
\begin{equation}
\limsup_{\epsilon\to 0^+} \hrad(C+\epsilon, x) \leq \hrad(C, x)
\end{equation}

Fix $r < \limsup \hrad(C+ \epsilon, x)$. For a decreasing sequence of $\epsilon >0$ converging to $0$, there are isothermal coordinate charts $H_\epsilon$ on $B_x(r)$ satisfying conditions $A1, B1$ of Definition \ref{defharmradius} with $C+\epsilon$ in place of $C$ and $r$ in place of $\hrad$. By the same arguments as above, we get that a subsequence of $H_\epsilon$ converges in $C_{loc}^{2,\alpha}$ to a limiting chart $H$. As before, the bounds $A1, B1$ are preserved under $\wlp{2}{\alpha}{}$ convergence, hence $\hrad(C, x) \geq r$. Since $r < \limsup \hrad( C +\epsilon, x)$ was arbitrary, this proves the claim.

We're now ready to prove the first inequality (\ref{in1}). As before, let $r_i = \hrad (g_i)$. We may suppose $\limsup r_i >0$. The arguments above show that convergence of the metrics in $\wlp{1}{\alpha}{\gamma}$ implies convergence of the isothermal charts $H_i$ in $C_{loc}^{2,\alpha}$. Once again, the bounds $A1, B1$ are preserved, so that $\hrad(g, C) \geq r$ for any $r \leq \limsup r_i$. Therefore we get the first inequality $\hrad(g, C) \geq \limsup \hrad(g_i, C)$. 

Now fix $r < \hrad(g, C)$. To obtain the second inequality (\ref{in2}), let $H: B(x,r) \to B_0$ be an isothermal coordinate chart for $g$, so that $(H^{-1})^*g = e^{2\phi(z)} \prod |z-z^j|^{2\alpha_j}|dz|^2$, with $z$ a holomorphic coordinate on $B_0 \subset \C$. Let $\Delta_i$ be the Laplace operator for the metric $g_i$. In the coordinate $z$, the Laplacian for the metrics $g_i$ has the form 
\begin{equation}
\Delta_i = e^{-2 (\phi_i(z)+ \sum_{j=1}^{m} \alpha_j\log |z- z^j|)}\Delta
\end{equation}
where $\Delta$ is the Euclidean laplacian. As observed before, we may assume the cone points $z^j$ are the same for each $i$ after passing to a subsequence. 
Now, if $w_i$ are solutions to 
\begin{align}
\Delta_i w_i &= 0 \text{ on }  B\\
w_i &= z \text{ on } \partial B
\end{align}
then the functions $u_i:= z- w_i$ are harmonic and vanish on the boundary of $B$. Since, for each $i$, the function $z$ also solves the boundary value problem (2.22-2.23) it follows from uniqueness that in fact $u_i = 0$. Therefore,  $\lim_{i\to \infty} ||u_i||_{2,\alpha} = 0$ and we have that for any compact subset $B' \subset B$ and for any $i$, $H$ is an isothermal coordinate chart for $g_i$.
Now, since the metrics $g_i$ converge to $g$ in $\wlp{1}{\alpha}{\gamma}$, $H$ is an \textit{isothermal} coordinate chart for $g_i$ in which the bounds of Definition \ref{defharmradius} are satisfied with constants $C_i \to C$ as $i\to \infty$. Using (\ref{3.9}), (\ref{3.10}), we have that for any $\epsilon >0$
$$r \leq \liminf r_I( g_i, C_i) \leq \liminf r_I(g_i, C+\epsilon)$$

Since $r \leq r_I(g, C)$ was arbitrary, this ends the proof of the second inequality. 
\end{proof}

An important property of the harmonic radius in the smooth case is that Euclidean space has infinite harmonic radius. The isothermal radius satisfies an analogous condition, but in this case the model space is a flat Riemann surface $M$ with finitely many cone points and angles less than $2\pi$, which is noncompact, complete and of \textit{quadratic area growth }, in the sense that for any $x \in M$ and any $r>0$
\begin{equation}\label{quadratic}
\frac{1}{V} r^2 \leq vol(B(r, x)) \leq V r^2
\end{equation}

\begin{theorem}\label{infiniteisorad}
Any noncompact conical surface with a finite number of conical singularities  and angles less than $2\pi$ which is flat, complete and of quadratic area growth has infinite isothermal radius. 
\end{theorem}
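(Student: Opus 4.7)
The plan is to exhibit a single global conformal diffeomorphism $H\colon M\to\C$ under which the metric pulls back to $\prod_{j=1}^n|z-z^j|^{2\alpha_j}|dz|^2$, i.e.\ with the conformal factor $\phi\equiv 0$. Once such an $H$ is produced, for any $x\in M$ and any $R>0$ the restriction $H|_{B(x,R)}$ is an isothermal chart on the geodesic ball $B(x,R)$ in which $\phi\equiv 0$ satisfies the bounds A1 and B1 of Definition \ref{defharmradius} trivially (for every $C>1$, $\alpha\in(0,1)$, $\gamma\in\R^n$), so $\hrad(x)\ge R$. As $R$ and $x$ are arbitrary, this will give $\hrad=\infty$. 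The task therefore reduces to constructing such an $H$.

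First, endow $M$ with its natural Riemann surface structure (the cone points become regular marked points via (\ref{isothermalcharts})) and show $M$ is conformally equivalent to $\C$. Simple connectedness follows from the positivity of the opening angle at infinity: by a Cohn--Vossen/Gauss--Bonnet argument the total curvature concentrated at the cone points equals $\sum 2\pi(-\beta_j)\in(0,2\pi)$, where the strict upper bound is enforced by the quadratic area growth (which prescribes an opening angle at infinity of $2\pi(1+\sum\beta_j)>0$), ruling out nontrivial loops. Parabolicity then follows from the standard volume-growth criterion that a complete Riemannian surface with $\mathrm{vol}(B(r))=O(r^2)$ admits no positive Green's function, forcing the uniformization to be $\C$ rather than $\mathbb{D}$. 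Fixing a conformal diffeomorphism $H\colon M\to\C$, one has $(H^{-1})^*g=e^{2\phi(z)}\prod_{j=1}^n|z-z^j|^{2\alpha_j}|dz|^2$ with $z^j=H(p_j)$ and $\alpha_j=\beta_j$. Applying Lemma \ref{gaussiancurvature} at smooth points, combined with $K_g\equiv 0$, gives $\Delta\phi=0$ on $\C\setminus\{z^j\}$; boundedness of $\phi$ near each $z^j$ (the conical singularity is entirely absorbed into the explicit weight $|z-z^j|^{2\alpha_j}$) then extends $\phi$, via the removable singularity theorem for bounded harmonic functions, to an entire harmonic function on $\C$.

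The decisive and technically most delicate step is to show this entire harmonic $\phi$ is constant. Quadratic area growth yields
$$\int_{H(B_g(x_0,R))} e^{2\phi(z)}\prod_{j=1}^n|z-z^j|^{2\alpha_j}\,dA(z)\le VR^{2},$$
and because all $z^j$ lie in a bounded subset of $\C$ while at infinity the weight is comparable to $|z|^{2\alpha_0}$ with $\alpha_0:=\sum\alpha_j>-1$, a careful comparison of $g$-geodesic balls with Euclidean balls converts this into a mean-value bound on $e^{2\phi}$ over Euclidean disks of arbitrarily large radius, forcing $\phi$ to be bounded above on $\C$. A one-sided bounded entire harmonic function is constant (apply Harnack to the nonnegative function $\sup_{B_0(r)}\phi-\phi$ as $r\to\infty$), so $\phi\equiv c$; rescaling the coordinate $z$ by $e^{c/(1+\alpha_0)}$ absorbs the constant and delivers $H$ with $\phi\equiv 0$. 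I expect the main obstacle to be precisely this extraction of an $R$-independent mean-value estimate on $e^{2\phi}$ from the geometric hypotheses, where the comparison of $g$-balls with Euclidean balls must account for the conical weight and the constants must remain controlled as $R\to\infty$; by contrast, the conformal uniformization step and the harmonicity/removable-singularity argument are relatively routine.
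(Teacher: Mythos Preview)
Your overall architecture matches the paper's: uniformize $M$ to $\C$ and then show the global conformal factor is constant. Your route to uniformization differs (you invoke a Cohn--Vossen/Gauss--Bonnet inequality and a volume-growth parabolicity criterion; the paper smooths the cones, uses Yau's Liouville theorem for $K\ge 0$ to get parabolicity, and a finite-quotient/Bishop--Gromov argument to force $\pi_1$ trivial). Both routes are viable, though your simple-connectedness sketch is a bit tangled: what actually works is that the total angular defect $\sum 2\pi(-\beta_j)>0$ combined with Cohn--Vossen gives $\chi\ge -\sum\beta_j>0$, hence $\chi=1$; the quadratic area growth is not what rules out nontrivial loops.

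The real divergence, and the real issue, is in the constancy of $\phi$. Your proposed mechanism is to convert the $g$-area bound $\mathrm{vol}_g(B_g(x_0,R))\le VR^2$ into an $R$-uniform mean-value control on $e^{2\phi}$ over Euclidean disks, and then run a Liouville argument. You correctly flag this as the obstacle, and it is a genuine gap: the region $H(B_g(x_0,R))\subset\C$ is defined through $g$-distance, which already depends on $\phi$, so any comparison with Euclidean disks presupposes exactly the kind of bound on $\phi$ you are trying to extract. Without an a priori one-sided bound on $\phi$, there is no clean inclusion $B_0(r)\subset H(B_g(x_0,R(r)))$ with $R(r)$ controlled, and the argument stalls.

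The paper sidesteps this circularity entirely and never uses the area growth at this stage. Instead it passes to the auxiliary \emph{smooth} metric $h=e^{2\phi}|dz|^2$ obtained by stripping off the explicit conical weight. Since $\phi$ is harmonic, $h$ is flat; since all $\alpha_j<0$, removing the weight only enlarges lengths away from the cone points, so $h$ inherits completeness from $g$. Now $(\C,h)$ is smooth, flat, complete and simply connected, so Cartan--Hadamard makes $\exp_0$ a global diffeomorphism pulling $h$ back to the constant metric $e^{2\phi(0)}|dz|^2$; a short argument then forces $\exp_0$ to be holomorphic, hence affine, hence $\phi$ constant. This uses only completeness and flatness of $h$, not the quadratic area growth, and cleanly closes the step you left open. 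If you want to keep your line of attack, you would need an independent device to relate $g$-balls to Euclidean balls without knowing $\phi$; absent that, the Cartan--Hadamard reduction is the missing idea.
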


\begin{proof}
Suppose $M$ is a complete flat conical surface of quadratic area growth, so that there exist $p_1, \dots, p_n$ in $M$ such that near each $p_i$ we can find a coordinate $z$ and a harmonic function $u$ with $g = e^{2u}|z-z_i|^{2\beta_i}|dz|^2$  where $z_i = z(p_i)$ and (\ref{quadratic}) is satisfied. If we smooth out the singularities $p_i$, the resulting surface $M'$ is still noncompact and complete. Moreover, since the cone angles are less than $2\pi$, the curvature can only increase upon smoothing the singularities, thus $M'$ has Gaussian curvature $K \geq 0$. It further follows from the volume comparison theorem of Bishop-Gromov \cite{petersen}   that if the original (singular) surface has quadratic area growth, then any smoothing $M'$ has at most quadratic area growth.

Now fix a smoothing $M'$ of $M$ and let $\tilde{M}$ be its universal cover. Since $M'$ is complete and noncompact, the pullback of the metric on $M'$ to $\tilde{M}$ by the covering map makes $\tilde{M}$ into a simply connected, complete, noncompact surface with Gaussian curvature $K \geq 0$. A complete surface is hyperbolic if it admits a positive Green's function. On the other hand, a theorem of Yau  \cite{yau} (see also \cite{blancfiala}) asserts that a complete surface of nonnegative Gaussian curvature admits no non-constant positive superharmonic functions. Thus $\tilde{M}$ cannot be hyperbolic and by the uniformization theorem, we have $\tilde{M}$ is parabolic, i.e. it is conformally equivalent to the complex plane. 

We actually claim that $M'$ is simply connected, so that $M'$ is parabolic. By Bishop-Gromov again, the universal cover $\tilde{M}$ of $M'$ has at most quadratic area growth. Moreover, since $M'$ has at least quadratic area growth, by Proposition 1.2 in \cite{andersonorbifold} we then have that $M'$ is the quotient of $\tilde{M}$ by a finite group of isometries $\Gamma$. Since $M'$ is smooth, $\Gamma$ must be trivial.

Since smoothing out the singularities doesn't change the topology or the conformal structure, $M$ is also simply connected and parabolic. Therefore, there exists a global coordinate $z$ such that the metric on $M$ has the form $e^{2v}|dz|^2$. Since $M$ has conical singularities, $e^{2v} = e^{2u} \prod |z- z_i|^{2\alpha_i}$. Hence, there exist global coordinates on $M$ for which the metric has the form
\begin{equation}\label{metric}
g = e^{2u} \prod |z- z_i|^{2\alpha_i} |dz|^2
\end{equation} 
where $u$ is harmonic. At this point, we have shown that a noncompact, complete, flat conical surface with cone angles less than $2\pi$ and quadratic area growth is conformally equivalent to the complex plane. Our final claim is that the function $u$ in (\ref{metric}) has to be constant. To see this, define 
\begin{equation}
h = \prod |z-z_i|^{-\alpha_i} g
\end{equation}
where $g$ is the metric in (\ref{metric}). In other words, $h = e^{2u}|dz|^2$. Since $g$ is flat by assumption, the function $u$ is harmonic. Therefore, the Gaussian curvature $K_h$ of $h$ satisfies $K_h= e^{-2u}\Delta u =0$. On the other hand, since the cone angles are less than $2\pi$, $\alpha_i <0$ for all $i$. Therefore, if $\gamma$ is any $C^1$ curve, then far away from the cone points,
\begin{align}\label{lengths}
\int h(\dot{\gamma}(t), \dot{\gamma}(t))^{\frac{1}{2}} dt = \int \prod |z(\gamma(t))- z_i(\gamma(t))|^{\frac{-\alpha_i}{2}}  g(\dot{\gamma}(t), \dot{\gamma}(t))^{\frac{1}{2}} dt \geq \int g(\dot{\gamma}(t), \dot{\gamma}(t))^{\frac{1}{2}}dt
\end{align}
where the inequality follows since $\prod |z(\gamma(t))- z_i(\gamma(t))|^{\frac{-\alpha_i}{2}} >>1$ whenever $ |z(\gamma(t))- z_i(\gamma(t))| >>1$. The assumption that $g$ is complete together with (\ref{lengths}) now imply that $h$ is complete.

 Now, let $F: T_{0}\C \to \C$ be the exponential map of the origin for the metric $h$. By the above arguments, the smooth metric $h$ is flat and complete, so by Cartan-Hadarmard's theorem (see \cite{petersen}, Thm 22) the exponential map $F$ is in fact a diffeomorphism of $\C$ that satisfies 

\begin{equation}\label{exp}
F^*h = e^{2u(0)}|dz|^2
\end{equation}
In other words, 
\begin{align*}
F^* h = e^{u\circ F} |\partial F +\bar{ \partial} F|^2 = e^{u\circ F} (|\partial F|^2 +|\bar{ \partial} F|^2 + 2Re \partial{F}\bar{\partial}{F})= e^{2u(0)}|dz|^2
\end{align*}

The last equality implies that either $\partial{F} = 0$ or $\bar{\partial}{F} = 0$, and since the orientation of the tangent space is the same as the base manifold, we must have that $\bar{\partial}{F} = 0$, so $F$ is holomorphic. A  standard result in complex analysis is that holomorphic diffeomorphisms of the plane are affine linear maps. Given that $F$ preserves the origin (it is the exponential map at the origin), we conclude that it must be of the form $$F(\zeta) = c\zeta$$ for some $c\neq 0$. But then $e^{u\circ F} = e^{u\circ (c\zeta)} = e^{2u(0)}$, so $u$ has to be constant. 

The sequence of arguments above now show that $M$ admits global coordinates for which the metric has the form $$g = C \prod |z- z_i|^{2\alpha_i} |dz|^2$$ from which it follows that the isothermal radius must be infinite. 
\end{proof}

The following is a generalization of $C^{1,\alpha}$ convergence to weighted $\wlp{1}{\alpha}{\gamma}$ convergence of conical metrics on Riemann surfaces. 

\begin{theorem}\label{convergence}
Let $M_i = (\Sigma, g_i, \beta)$ be a sequence of complete, conformal conical surfaces with metrics $g_i$ representing the divisor $\beta$. Let $\{x_i\} \in M_i$ be a sequence of points. Given $\Lambda >0, C>1, \alpha \in (0,1)$, suppose that 
\begin{enumerate}
\item for any $i$, $||K(g_i)||_{0} \leq \Lambda$ away from the cone points
\item there exists $r>0$ such that for any sequence of points $(y_i)$ in $M_i$ there is an isothermal chart $H_i:\Omega_n \to B_0(r)$ where $\Omega_i$ is some open set in $M_i$ and $B_0(r)\subset \C$ such that for any $i$, there exists $\phi_i$ smooth, with $\frac{1}{C} \leq \phi_i(z) \leq C$ such that 
\begin{equation}\label{localrep}
(H_i^{-1})^*g_i = e^{2\phi_i(z)} \prod_{j=1}^{m}|z-z_j|^{2\alpha_j}|dz|^2
\end{equation}
where $z$ are holomorphic coordinates on $B_0(r)$ and $z_j = H_n(p_j)$ correspond to the cone points $p_j$  
\item for $\gamma \in \R^n$, a subsequence of $(H_i^{-1})^*g_i$ converges in $\wlp{1}{\alpha}{\gamma}$ on $B_0(r)$
\end{enumerate}
Then there exists a complete Riemannian manifold $M$ of class $\wlp{2}{\alpha}{}$, there exists a conformal conical metric $g$ of class $\wlp{1}{\alpha}{\gamma}$ and a point $x \in M$ such that the following holds: for any compact domain $D \subset M$ with $x\in D$ there exist, up to passing to  a subsequence, compact domains $D_i \subset M_i$ with points $x_i \in D_i$ and $\wlp{2}{\alpha}{}$ diffeomorphisms $\Phi_n: D \to D_i$ satisfying 
\begin{align}
&\lim_{i\to \infty}(\Phi_i^{-1})(x_i) = x\\
&\Phi_i^*g_i \textit{ converges in } \wlp{1}{\alpha}{\gamma} \textit{ in any chart of the induced } \wlp{2}{\alpha}{} \textit{ complete atlas of } D. 
\end{align}
\end{theorem}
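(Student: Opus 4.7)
The plan is to follow the classical patching construction of Anderson for harmonic-radius convergence, adapted to the isothermal radius in the weighted $\wlp{1}{\alpha}{\gamma}$ setting. The argument has three stages: constructing local limits via a Cantor diagonal extraction, patching them into a global limit manifold $M$ using transition functions between isothermal charts, and finally assembling the diffeomorphisms $\Phi_i$ on compact domains.

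First, I would produce the local limit at the basepoint. Applying hypothesis (2) to the constant sequence $y_i = x_i$ gives isothermal charts $H_i^0 : \Omega_i^0 \to B_0(r)$ of uniform radius $r$; hypothesis (3), together with the same elliptic estimates on harmonic coordinate functions that drove the proof of Lemma \ref{uppersemicont}, then lets me extract a subsequence on which $(H_i^0)^{-1*}g_i$ converges in $\wlp{1}{\alpha}{\gamma}$ to a limit metric $h^0$ on $B_0(r)$, while the number $m$ of cone points, the exponents $\alpha_j$, and the positions $z^j$ appearing in (\ref{localrep}) all stabilize along the subsequence (as noted in that proof).

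Second, I would propagate this construction. Pick a countable dense set of points in the image $B_0(r)$, pull them back through $H_i^0$ to sequences $y_i^k \in M_i$, and apply hypothesis (2) at each $\{y_i^k\}$ to obtain further isothermal charts $H_i^k : \Omega_i^k \to B_0(r)$. A Cantor diagonal extraction yields one subsequence along which every $(H_i^k)^{-1*}g_i$ converges in $\wlp{1}{\alpha}{\gamma}$ to a metric $h^k$. On overlaps, the transition maps $\tau_i^{jk} = H_i^j \circ (H_i^k)^{-1}$ relate two isothermal representations of the same metric $g_i$, so their components are harmonic for a uniformly elliptic equation whose coefficients are controlled by (A1)--(B1); arguing exactly as in Lemma \ref{uppersemicont}, $\tau_i^{jk}$ converges in $C^{2,\alpha}_{loc}$ to a $C^{2,\alpha}$ diffeomorphism $\tau^{jk}$, and passing to the limit in $(\tau_i^{jk})^*h_i^j = h_i^k$ gives $(\tau^{jk})^*h^j = h^k$. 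Hence the $H_i^k$ descend to a $\wlp{2}{\alpha}{}$ atlas on a topological surface $M$ carrying a well-defined $\wlp{1}{\alpha}{\gamma}$ conformal conical metric $g$, with basepoint $x$ determined as the chart image of $x_i$ under $H_i^0$. One may keep iterating the selection of new chart centers on the boundary of previously built charts; because hypothesis (2) supplies a uniform isothermal radius $r$ independent of the basepoint, this process propagates $M$ indefinitely, and completeness of $M$ follows from completeness of each $M_i$ via the standard argument that any Cauchy sequence is eventually contained in a relatively compact chart. The diffeomorphisms $\Phi_i : D \to D_i$ on a compact domain $D \subset M$ are then assembled by covering $D$ with finitely many charts $U^{k_1}, \dots, U^{k_N}$, defining $\Phi_i$ locally as $(H_i^{k_l})^{-1} \circ H^{k_l}$, and patching with a smooth partition of unity; these are $\wlp{2}{\alpha}{}$ diffeomorphisms for $i$ large because the transition data converges in $C^{2,\alpha}$, and $\Phi_i^*g_i \to g$ in $\wlp{1}{\alpha}{\gamma}$ then follows chart by chart from $(H_i^{k_l})^{-1*}g_i \to h^{k_l}$.

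The main obstacle I expect is the behavior of the weighted Hölder norm near cone points. Two things must be checked: that the elliptic regularity used to obtain $C^{2,\alpha}_{loc}$ convergence of the coordinate functions is compatible with the weighted norm on the conformal factor, and that the limit transition maps $\tau^{jk}$ send cone points to cone points with the correct matching of exponents, so that the glued metric $g$ genuinely represents a divisor in the sense of Definition \ref{conicalconformaldef}. Both points reduce, after passing to the subsequence on which $m$, $\alpha_j$, and $z^j$ stabilize, to standard elliptic estimates away from a finite punctured set, because the weight $\rho^\gamma$ is comparable to $|z - z^j|^{\gamma_j}$ in each chart and the cone locations are fixed; the weighted $\wlp{1}{\alpha}{\gamma}$ convergence of the metrics forces the weighted $\wlp{2}{\alpha}{}$ convergence of the transition functions through the same harmonic-coordinate equation used in Lemma \ref{uppersemicont}, so no new analytic input is required beyond a careful bookkeeping of weights.
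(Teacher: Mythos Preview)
Your proposal is correct and is precisely the argument the paper has in mind: the paper's own proof consists of a single sentence deferring everything to the smooth case in \cite{hebey, peters, 1anderson1}, on the grounds that the conical singularities are only in the metric and the local representation (\ref{localrep}) makes the underlying chart structure smooth. What you have written is exactly the Anderson--Peters--Hebey patching construction those references contain, specialized to isothermal charts and with the bookkeeping for weights and cone-point stabilization spelled out; there is no difference in strategy, only in the level of detail supplied.
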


\begin{proof}
The proof is exactly as in the smooth case (see \cite{ hebey, peters}  and \cite{1anderson1} for a summarized version) since conical surfaces are considered to have singularities only in the metric sense, as can be seen from the local coordinate expression in (\ref{localrep}) above. 
\end{proof}

\begin{theorem}\label{compactthm}:
Let $\Sigma$ be a compact Riemann surface without boundary and fix a divisor $\beta= \sum_{j=1}^n \beta_j p_j$ on $\Sigma$ such that  $-1 < \beta_j <0$ for all $j$. If $M_i= (\Sigma, g_i, \beta$) is a sequence of smooth conformal conical metrics on $\Sigma$ representing the divisor $\beta$ such that there exist constants $D_0, \Lambda, v_0>0$ for which 

\begin{enumerate}
\item $diam(M_i) \leq D_0$
\item  there exists $t_0>0$ such that $vol_{g_i}(B(r)) \geq v_0$ for every $r \leq t_0$ 
\item $||K_{g_i}(x)||_{0} \leq \Lambda$ away from the cone points
\end{enumerate}
Then for any $\gamma\in \R^n$, there exists a subsequence of $(g_i)$, $\wlp{2}{\alpha}{}$ diffeomorphisms $F_i:\Sigma \to \Sigma$ and a $\wlp{1}{\alpha}{\gamma}$ conformal conical metric $g$ representing a divisor $\beta'$ such that 
\begin{equation}
||(F^*_i g_i)_{st}- g_{st}||_{1, \alpha; \gamma} \to 0
\end{equation}
as $i\to \infty$, where  $\beta' = \sum_{j=1}^m \beta_j' q_j$ with $-1<\beta'_j \leq 0$, $m\leq n$.
\end{theorem}

\begin{proof}
The first part of the proof is a blow-up argument to show that, under the hypotheses of the theorem, there is a uniform lower bound on the isothermal radius. So to begin, let $D_0, \Lambda, v_0$ be positive constants as in the statement of the theorem and $\alpha \in (0,1)$. Given a conical Riemann surface $(\Sigma, g, \beta)$ and $C, t_0>0$ satisfying 
\begin{align}
||K||_{0} &\leq \Lambda \\
 vol_g (B(r)) \geq v_0 &>0  \text{   } \forall r \leq t_0 \label{cond2}
\end{align}
we show that there exists $r_0 = r_0(\Lambda, v_0)>0$ such that for every $x \in B(r)$ and any $\gamma \in \R^n$
\begin{equation}\label{isolower}
\dfrac{\hrad(x, C, \alpha)}{d_g(x, \partial B(r))}\geq r_0 >0
\end{equation}
where $\hrad(x, C, p)$ is as usual the isothermal radius. 
Indeed, if (\ref{isolower}) does not hold, there exists a sequence of conical metrics $M_i= (\Sigma_i, g_i, \beta)$ representing the same divisor $\beta$ with Gaussian curvature $|K_i|_{0} \leq \Lambda$, there exists a sequence of balls $B_i = B_i(r) \subset M_i$ of radius $r \leq t_0$ , there exists $\gamma \in \R^n$ and there exists a sequence of points $x_i \in B_i$ such that 
\begin{equation}
\dfrac{r_i(x_i)}{d_i(x_i, \partial B_i)} \to 0
\end{equation}
where $d_i = d_{g_i}$ is the induced distance on $M_i$ from $g_i$ and $r_i(x) = r_I(g_i, x)$ is the isothermal radius of the metric $g_i$ at $x$. 

Set $R_i(x) := \frac{r_i(x)}{d_i(x, \partial B_i)}$. By the same arguments as in \cite{hebey, 1anderson1}, we may as well assume the points $x_i$ minimize $R_i(x)$. Rescaling the metrics $g_i$ as 
\begin{equation}
h_i = \dfrac{1}{r_i(x_i)^2}g_i
\end{equation}
we get 
\begin{align}
\lim_{i \to \infty}||K_{(B_i, h_i)}||_{0} &= r_i(x_i)^2 ||K_i||_{0} \leq r_i(x_i)^2\Lambda  \to 0\label{curv} \\
\lim_{i \to \infty}vol(B_i, h_i) &= \infty \label{vol}\\
\lim_{i \to \infty}d(x_i, \partial B_i) &= \infty \label{diameter}
\end{align}
where (2.37) holds away from the cone points. By Lemma \ref{propertiesisorad} (3), the isothermal radius scales as the distance under rescalings of the metric, thus the isothermal radius of the new metrics $h_i$ satisfies
\begin{equation}
r'_i (x_i) := \hrad(h_i, x_i) = 1
\end{equation}
Moreover, for every $y \in B_i$ and for every $i$
\begin{align*}
r'_i(y) = \dfrac{r_i(y)}{r_i(x_i)} \geq \dfrac{d_i (y, \partial B_i)}{d_i(x_i, \partial B_i)} = \dfrac{d'_i(y, \partial B_i)}{d'_i(x_i, \partial B_i)}
\end{align*}
(where the first inequality follows since $R_i(y) \geq R_i(x_i)$ and $d'_i$ is the induced distance from $h_i$). Set 
\begin{equation}
\delta_i := \dfrac{1}{d'_i(x_i, \partial B_i)}
\end{equation}
Then $\lim_{i \to \infty} \delta_i = 0$ and for all $y \in B(x_i, \frac{1}{2\delta_i})$ (the geodesic ball for the metric $h_i$  with center $x_i$) we have 
\begin{equation}
r'_i(y) \geq \frac{1}{2}
\end{equation}
Hence, the isothermal radius of the rescaled metrics is bounded from below and is at most  $1$.
Now we claim $(B_i, x_i, h_i)$ converges in $\wlp{1}{\alpha}{\gamma}$ uniformly on compact sets to a complete (noncompact) manifold $(M, y, h)$. First, the argument above implies that given $R<\infty$, $r'_i(y) \geq \frac{1}{2}$ on $B(x_i, R)$ for $i$ large enough. Thus given $R < \infty$ and a sequence $(q_i)$ in $B(x_i, R)$ we can find isothermal charts $H_i:  \Omega_i \to B_0 (\frac{1}{2\sqrt{C}}) $ centered at $q_i$ such that  
\begin{align}
(H_i^{-1})^*h_i &= e^{2\phi_i(z_i)}\prod_{j=1}^{m_i} |z_i - z_i^j|^{2\alpha_j}|dz|^2
\end{align}
where $\phi_i(z_i): B_0 (\frac{1}{2\sqrt{C}})\to \R$ are smooth and bounded, the integers $m_i = m_i(q_i)$ and the real numbers $\alpha_j$ satisfy $1\leq m_i \leq n$, $-1 < \alpha_j \leq 0$ for all $i= 1, \dots $ and $1 \leq j \leq m_i$. As before, $z_i$ is a holomorphic coordinate on $B_0 = B_0(\frac{1}{2\sqrt{C}})$ and $z^j_i = H_i(p_j)$ ($p_j$ are cone points). Moreover, 
\begin{align}
&\dfrac{1}{C} \leq \phi_i(z_i) \leq C\\
&||\phi_i(z_i)||_{1,\alpha; \gamma} \leq F(C)  \label{thisone}
\end{align}
with $F$ depending only on $C$. By (\ref{thisone}) we have that $(\phi_i(z_i))$ are bounded in $\wlp{1}{\alpha}{\gamma}$ on the ball $B_0$, so after passing to a subsequence we can assume they converge weakly in $\wlp{1}{\alpha}{\gamma}$ to some $\phi$ by the Arzela-Ascoli Theorem. In particular, the metrics $(H_i^{-1})^*h_i =: h'_i$ converge weakly in $\wlp{1}{\alpha}{\gamma}$  on $B_0$.  We claim that in fact the metrics $h'_i$ converge strongly in $\wlp{1}{\alpha}{\gamma}$.  Indeed, from Lemma \ref{gaussiancurvature}, we have that the Gaussian curvature $K_i$ of the metrics $h_i$ in the coordinates $z_i$ is given by 
\begin{equation}
K_i(z_i) = \dfrac{e^{-2\phi_i(z_i)}\Delta \phi_i(z_i)}{\prod_{j=1}^{m_i}|z_i-z^j_i|^{2\alpha_j}}
\end{equation}
As we observed in the proof of Lemma \ref{uppersemicont}, we can assume $z_i^j = z^j$ and $m_i = m$ are independent of $i$ by passing to a subsequence.
By $(\ref{curv})$, $||K_{(B_i, h_i)}||_{0}  \to 0$ away from the cone points, so that $||(H^{-1}_i)^*K_i||_{0} = ||K_i(z_i)||_{0} \to 0$ as $i \to \infty$ for all $z_i \neq z^j \in B_i$. Now, the limit $\phi(z)$ of the sequence $(\phi_i(z_i))$ solves 
 \begin{equation}\label{curva}
 0 = \dfrac{e^{-2\phi(z)}\Delta \phi(z)}{\prod_{j=1}^{m}|z-z^j|^{2\alpha_j}}
 \end{equation}
weakly. Applying standard elliptic estimates to (\ref{curva}), we get $\phi(z)$ is actually smooth. In fact, using the same arguments as in the proof of  Lemma \ref{uppersemicont} (second paragraph following (\ref{A1bounds})), we get that the convergence is actually in the \textit{strong} $\wlp{1}{\alpha}{\gamma}$ topology, hence the metrics $h'_i$ converge strongly in $\wlp{1}{\alpha}{\gamma}$.

It now follows from the arguments used to prove Theorem \ref{convergence} (see for instance Proposition 12 in \cite{hebey}) that there exists a $\wlp{2}{\alpha}{}$ manifold $M$,  $y \in M$ and  a $\wlp{1}{\alpha}{\gamma}$ conformal conical metric $h$ on $M$ such that for any compact domain $D \subset M$ with $y \in D$ and after passing to a subsequence, there exist compact domains $D_i \subset B_i$ and $y_i \in D_i$ and  $\wlp{2}{\alpha}{}$ diffeomorphisms $\Phi_i: D \to D_i$ such that 
\begin{align}
\lim_{i\to \infty}(\Phi_i^{-1})(y_i) &= y\\
||(\Phi_i^{-1})^*h_i - h||_{1,\alpha;\gamma} &\to 0 \text{ in } D \label{converg}
\end{align}
where (\ref{converg}) is in the sense that in any chart of the complete induced atlas on $D$ the components of $(\Phi_i^{-1})^*h_i$ converge in $\wlp{1}{\alpha}{\gamma}$ to the components of $h$. 

We now claim the pointed limit $(M,h)$ is flat, conical and complete. First, the completeness follows from (\ref{diameter}). To see $(M, h)$ is flat, given a compact domain $D$ in $M$, set $\hat{h}_i:= \Phi^*_i h_i$ and for a given $x \in D$, let $U_i: B_x(r) \to \C$, $r>0$, be an isothermal coordinate chart for $\hat{h}_i$ satisfying $A1, B1$ of Definition \ref{defharmradius}.  The convergence of the $\hat{h}_i$ implies convergence in $\wlp{2}{\alpha}{}$ of the charts $U_i$ to a limiting chart $U:B_x(r) \to \C$, by the same arguments as in Lemma \ref{uppersemicont}. If we write $(U^{-1})^*h = e^{2\psi(z)}\prod_{j=1}^{m}|z-z_j|^{2\alpha_j} |dz|^2$, then going back to (2.48)  we have 
\begin{equation}\label{this}
 0 = \dfrac{e^{-2\psi(z)}\Delta \psi(z)}{\prod_{j=1}^{m}|z-z_j|^{2\alpha_j}}
\end{equation}
It follows from standard elliptic estimates then that the limit metric $h$ is smooth on $\Sigma^*$. Since (\ref{this}) is also the equation for the Gaussian curvature of $h$, we also have that it is flat. Moreover, assumption (2) in the Theorem along with the Bishop-Gromov volume comparison Theorem imply that 
\begin{equation}
\dfrac{vol(B(s))}{V^\Lambda(s)}\geq \dfrac{v}{V^\Lambda(t_0)}
\end{equation}
for all $s \leq t_0$, where $V^\Lambda$ is the volume of a geodesic ball in constant curvature $\Lambda$ and $vol(B(s))$ is taken in the $g_i$ metric. Using scaling properties of volume and the convergence in $\wlp{1}{\alpha}{\gamma}$, we have 
\begin{equation}
\dfrac{vol_M (B(s))}{s^2} \geq v' >0
\end{equation}
for all $s>0$. On the other hand, since $(M, h)$ is flat with cone angles less than $2\pi$, the volume of a ball of radius $s$ measured in the metric $h$ has to be less than the volume of a ball of radius $s$ measured with respect to the standard metric on $\R^2$, hence 
\begin{equation}
\dfrac{vol_M (B(s))}{s^2} \leq v'
\end{equation}
and we conclude that the volume growth must be exactly quadratic. 

To obtain a contradiction, observe that since the $h_i$ converge to $h$ in $\wlp{1}{\alpha}{\gamma}$, by  Lemma \ref{uppersemicont}, we  have that 
\begin{equation}
\hrad(C', p, y) \leq \lim_{i\to \infty} \hrad(C, p, y_i) 
\end{equation}
for some $C' < C$, but by construction $\hrad(C, p, y_i) = 1$, while a flat, noncompact, complete  surface with finitely many conical singularities and of quadratic area growth has infinite isothermal radius for any $C'$, as follows from Theorem \ref{infiniteisorad}.

Therefore, there is a uniform lower bound on the isothermal radius, which in turn allows us to apply Theorem \ref{convergence} to conclude the existence of a limit with the desired properties. In the notation of Theorem \ref{convergence}, with $D = B_x(R)$, $R> D_0$, we get that for $i$ large enough, $D_i = M_i$, and up to passing to a subsequence there exist diffeomorphisms $\Phi_i: M \to M_i$ such that $\Phi_i^*g_i$ satisfies conditions (2.30-2.31) of Theorem \ref{convergence}. In particular, $M$ has a smooth structure coming for instance from one of the diffeomorphisms $\Phi_i$ with $M_i$. 
\end{proof}

\section{Conformal Conical Metrics on the 2-sphere}
In this section we are concerned with  conformal conical metrics on the 2-sphere with angles less than $2\pi$ and at least three conical singularities. In the language of Definition \ref{conicalconformaldef}, let $n \geq 3$, $\beta = \sum_{i=1}^n \beta_i p_i$ be a divisor on $S^2$ such that $-1< \beta_i <0$ for each $i =1 ,\dots n$ and $g_\beta$ be a conical metric on $S^2$ representing the divisor $\beta$. We begin with some results concerning the conformal geometry of $(S^2, \beta)$.

Define $Iso (S^2, g_\beta)= \{ \phi \in Diff(\Sigma): \phi^* g_\beta = g_\beta\}$, which forms a group under composition. 
A diffeomorphism $\psi: S^2 \to S^2$ is called a conformal transformation of $(S^2, g_\beta) $ if $\psi^*(g_\beta) \in [g_\beta]$, i.e. if there exists a function $u: S^2 \to \R$ which is smooth and positive and such that 
	$$\psi^*g_\beta = e^{2u}g_\beta$$
It is clear that the set of all conformal transformations forms a group under composition. We denote it by $Conf(S^2, g_\beta)$. Observe that if $\psi\in Diff(S^2)$, then $\psi^*g_\beta$ is always a conical metric with the same number of conical singularities as $g_\beta$. 
 	Suppose $\phi \in Conf(S^2, g_\beta)$. Then by definition, $$\phi^* g_\beta = \eta^2_\phi g_\beta$$
 	where $\eta_\phi = |D\phi|>0$. Now suppose $g= e^{2u}g_\beta$. We have $$\phi^* g = e^{2(\phi^*u)}\phi^*g_\beta = e^{2(u\circ \phi + \log \eta_\phi)}g_\beta$$
 	Hence the conformal group $Conf(S^2, g_\beta)$ acts on the conformal factors $u \in \dom$ by 
 	\begin{equation}\label{confactiononfactors}
 		(\phi, u) \to u\circ \phi + \log \eta_\phi
 	\end{equation}
 	
 On the other hand, $Conf(S^2, g_\beta)$ acts on the functions $K\in \cdom$ by precomposition, i.e. 
 	\begin{equation}\label{confactiononcurvature}
 		(\phi, K) \to K \circ \phi
 	\end{equation}

 The group of diffeomorphisms of the sphere $S^2$ which are $\wlp{k-2}{p}{}$ is denoted by Diff$^{k-2, p}$ and acts on the curvature functions again by precomposition, as in (\ref{confactiononcurvature}). 

\begin{theorem}\label{finiteconformal}
Suppose $\beta = \sum_{i=1}^n \beta_i p_i$ is a divisor on $S^2$ and $g_{+1}$ is the round metric of curvature $1$ on $S^2$. If $n\geq 3$ and $g_\beta$ is a conical metric on $(S^2, g_{+1})$ representing the divisor $\beta$, then $Conf(S^2, g_\beta)$ is finite. Moreover, if there exists $i,j,k$ distinct, such that $\beta_i, \beta_j, \beta_k$ are all distinct, then the conformal group $Conf(S^2, g_\beta) = 0$. 
\end{theorem}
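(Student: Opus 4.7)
The strategy is to exhibit $\text{Conf}(S^2, g_\beta)$ as a subgroup of the Möbius group of $(S^2, g_{+1})$ that permutes the conical divisor, and then use the rigidity of Möbius transformations at three points.

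First, observe that away from the cone points $g_\beta$ is smooth and lies in the conformal class of $g_{+1}$. For $\phi \in \text{Conf}(S^2, g_\beta)$, the relation $\phi^*g_\beta = e^{2u}g_\beta$ with $u$ smooth shows that $\phi$ restricted to $S^2 \setminus \{p_1, \dots, p_n\}$ is conformal for $g_{+1}$. Since $\phi$ is a diffeomorphism of the compact surface $S^2$, a removable-singularity argument (equivalently, working in isothermal coordinates for $g_{+1}$ around each $p_i$ and using the smoothness of $\phi$) shows $\phi$ extends to a Möbius transformation, i.e.\ an element of $\text{Conf}(S^2, g_{+1})$.

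Second, I would show that every such $\phi$ permutes the cone points and preserves their exponents. Choose local holomorphic coordinates $z$ centered at $p_i$ and $w$ centered at $\phi(p_i)$ in which $g_\beta$ has the form (\ref{isothermalcharts}), say $g_\beta = e^{2\phi_i(z)}|z|^{2\beta_i}|dz|^2$ and $g_\beta = e^{2\tilde\phi(w)}|w|^{2\beta_j}|dw|^2$ (with $\beta_j = 0$ if $\phi(p_i)$ is a smooth point). Since $\phi$ is biholomorphic and sends $0 \to 0$, we may write $w \circ \phi = z\, h(z)$ with $h$ holomorphic and non-vanishing at $0$, so
\[
\phi^*g_\beta = e^{2\tilde v(z)}|z|^{2\beta_j}|dz|^2
\]
with $\tilde v$ smooth. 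Equating this with $e^{2u(z) + 2\phi_i(z)}|z|^{2\beta_i}|dz|^2$ and using that $|z|^{2(\beta_j - \beta_i)}$ is smooth and non-vanishing at $0$ only when $\beta_i = \beta_j$ forces $\beta_j = \beta_i$. Thus $\phi(p_i) = p_{\sigma(i)}$ for some permutation $\sigma \in S_n$ with $\beta_{\sigma(i)} = \beta_i$.

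Third, the assignment $\Psi : \text{Conf}(S^2, g_\beta) \to S_n$ given by $\phi \mapsto \sigma$ is a homomorphism. Its kernel consists of Möbius transformations fixing each $p_i$; since $n \geq 3$ and a Möbius transformation fixing three distinct points is the identity, $\Psi$ is injective. Hence $\text{Conf}(S^2, g_\beta)$ embeds into $S_n$ and is finite. For the sharper claim, if $\beta_i, \beta_j, \beta_k$ are pairwise distinct then the image of $\Psi$ is contained in the pointwise stabilizer of $\{i, j, k\}$; injectivity then forces $\text{Conf}(S^2, g_\beta)$ to be trivial. The main obstacle is the second step, namely ruling out that a smooth conformal factor $u$ could absorb a conical-exponent mismatch $|z|^{2(\beta_j - \beta_i)}$ at $p_i$ — this rigidity of the conical exponent under smooth conformal change is the only substantive input and is what ties a conformal self-map of $g_\beta$ to a permutation of the divisor.
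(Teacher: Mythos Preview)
Your proposal is correct and follows essentially the same approach as the paper: identify elements of $\mathrm{Conf}(S^2,g_\beta)$ with M\"obius transformations via a removable-singularity argument, observe that they permute the cone points, and use that a M\"obius transformation is determined by its action on three points. Your argument is in fact slightly more streamlined than the paper's, which routes through the auxiliary group $\mathrm{Conf}(S^2\setminus\{p_1,\dots,p_n\},g_{+1})$ and a restriction homomorphism; you also make explicit the exponent-matching step (that $\phi$ must send $p_i$ to a cone point with the \emph{same} $\beta_i$), which the paper only asserts in the final sentence.
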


\begin{proof}
To begin, observe that every conformal transformation $\psi \in Conf(S^2 - \{p_1, p_2, \dots p_n\}, g_{+1})$ has an extension $\tilde{\psi} \in Conf(S^2, g_{+1})$. This follows from the fact that any conformal map $\psi\in Conf(S^2 - \{p_1, p_2, \dots p_n\}, g_{+1})$ can be viewed as a biholomorphism $\C - \{q_1, \dots, q_{n-1}\} \to \C - \{q_1, \dots, q_{n-1}\} $ after conjugation with the stereographic projection from, say, $p_n$. 

To be precise, let $\sigma_{p_n}$ be the sterographic projection $S^2 - \{p_n\} \to \C$ and suppose $q_i = \sigma_{p_n}(p_i)$ for $i = 1, \dots n-1$.  The restriction of $\sigma_{p_n}$ to the punctured sphere $S^2- \{p_1, \dots p_n\}$ gives a diffeomorphism with $\C - \{q_1, \dots q_{n-1}\}$. In particular, if $\psi \in Conf(S^2 - \{p_1, \dots, p_n\}, g_{+1})$, then 
$$\bar{\psi}:= \sigma^{-1}_{p_n}\circ \psi \circ \sigma_{p_n}: \C - \{q_1, \dots q_{n-1}\} \to \C - \{q_1, \dots q_{n-1}\}$$
is a biholomorphism. We claim now that the points $q_i$ are removable singularities for $\bar{\psi}$. Indeed, if $\bar{\psi}$ had an essentially singularity at any of the points $q_i$, then by Picard's Theorem, $\bar{\psi}$ would take on all possible values with at most one exception on any neighborhood of $q_i$, infinitely often, but this would contradict injectivity. The second observation is that $\psi$ has at worst one pole of order one. Again, any higher order pole is excluded because of injectivity. If there were two simple poles at say $q_i, q_j$, then $\bar{\psi}(q_i)= \infty$, $\bar{\psi}(q_j)= \infty$, but since $\psi$ is an open map, it would map a punctured neighborhood of $q_i$ to a neighborhood of $\infty$ and a punctured neighborhood of $q_j$ to a neighborhood of $\infty$. The fact that these two neighborhoods must intersect contradicts injectivity once again.

It then follows that $\bar{\psi}$ extends to a biholomorphism of the Riemann sphere, hence it corresponds to a conformal map $\tilde{\psi}$ of $S^2(1)$.

Now, observe that if $p_1, p_2 \dots, p_n$ are cone points for the conical metric $g_\beta$, then 
$$Conf(S^2 - \{p_1, \dots, p_n\}, g_\beta) = Conf(S^2 - \{p_1, \dots, p_n\}, g_{+1})$$
This follows from the fact that $g_\beta$ is conformal to $g_{+1}$ by definition.  

Finally, we conclude that the group $Conf(S^2 - \{p_1, \dots, p_n\}, g_{+1})$ is finite if $n\geq 3$. Indeed, if $\psi \in Conf(S^2- \{p_1, \dots p_n\}, g_{+1})$, then its extension $\tilde{\psi}$ to a conformal map of the round sphere fixes the set $\{p_1, \dots p_n\}$. Suppose $\tilde{\psi}(p_1) = p_i, \tilde{\psi}(p_2) = p_j $, $\tilde{\psi}(p_3) = p_k$, where $i, j, k \in (1, \dots n)$ are all distinct. Since $\tilde{\psi}$ is a Mobius transformation, its values are uniquely determined after specifying the image of the points $p_1, p_2, p_3$. Since there are only finitely many choices for $p_i, p_j, p_k$, the collection of extensions of $\psi \in Conf(S^2- \{p_1, \dots p_n\}, g_{+1})$ is finite. In particular, the group $Conf(S^2 - \{p_1, \dots, p_n\}, g_{+1})$ is finite.

 Now, let $supp (\beta) = \{p_1, \dots p_n\}$. Define $F: Conf (S^2, \beta, g)  \to Conf (S^2 - supp(\beta), g)$ by $F(\phi) = \phi\big|_{S^2 - supp(\beta)}$. Since any $\phi \in Conf (S^2, \beta, g) $ fixes the set $supp(\beta)$, we have that $F$ is a surjective group homomorphism. Moreover, 
$$ker(F)= \{ \phi \in Conf(S^2, \beta, g): F(\phi) =\phi|_{S^2 - supp(\beta)}= Id|_{S^2 - supp(\beta)}\} $$
The only freedom is in where the points $p_1, \dots p_n$ are sent, and we know $\phi$ fixes them on $(S^2, \beta, g)$. Hence we have 
$ker(F)$ is isomorphic to a subgroup of $S_n$, the symmetric group on $n$ elements. Finally, since $Conf (S^2 - supp(\beta), g)$ is finite for $n\geq 3$, and  $ker(F)$ is also finite, we must have $Conf (S^2, \beta, g)$ is finite.
If there are three distinct angles, any conformal map has to fix them, but every conformal map of the unit disk fixing three points is the identity. So  the conformal group must be trivial in this case. This concludes the proof of the theorem. 
\end{proof}

\begin{remark}
The condition that $n\geq 3$ is only sufficient. In the examples below we show that there is a metric on $S^2(1)$ with two conical singularities and noncompact conformal group. 
\end{remark}

\subsection{Examples}

\textit{Example 1: The football}. Let $\Sigma = S^2(1)/\Z_k$ where $\Z_k$ acts by rotations. The quotient is known as the American football and it is a topological sphere with two conical singularities, each of angle $\frac{2\pi}{k}$. If $\bar{g}$ is the induced metric on the quotient, i.e. $\pi^*\bar{g} = g_{+1}$, where $\pi:S^2 \to S^2/\Z_k$ is the quotient map, then $Conf(S^2/\Z_k, \bar{g})$ is noncompact. Indeed, let $\phi_\lambda: S^2 \to S^2$ be $\phi_\lambda(x) = \sigma_p^{-1} \circ \delta_\lambda \circ \sigma_p$ where $\delta_\lambda(x) = \lambda x$ ($\lambda >0$) is a dilation of $\R^3$ and $\sigma_p$ is the stereographic projection from $p$ (the composition is extended to the whole sphere by sending $p \to p$) . The action of $\Z_k$ on $S^2$ can be viewed as an action of $\Z_k$ on $\C$ after identifying $S^2-{p}$ with the complex plane via the stereographic projection. From this point of view, for each element $[m] \in \Z_k$ we get a map $\psi_m (z) = \zeta^m\cdot z$, where $\zeta$ is a $kth$ root of unity. Then one can check that 
$$\phi_\lambda \circ \psi_m = \psi_m \circ \phi_\lambda$$ for every $[m] \in \Z_k$. In particular, the map $\phi_\lambda$ descends to the quotient and it will be a conformal map of $(S^2/\Z_k, \bar{g})$. 

\textit{Example 2. Variation on the Football.} Another way to obtain the American football of Example 1 is by cutting out two neighborhoods of say, the north and south pole and gluing back two different cones, e.g we can replace a neighborhood $U_1$ of the north pole by a quotient of the disk $D/\Z_n$ and a neighborhood $U_2$ of the south pole by the quotient $D/\Z_m$. When $n=m$, upon choosing appropriate gluing maps and metrics on the cone pieces, this space can be realized as the quotient $S^2/Z_n$ of Example 1. In the case when $n\neq m$,  there is no metric of constant curvature with conical singularities at the poles (\cite{troyanov}). 

\textit{Example 3. Double of a Spherical Triangle.} Let $T$ be a spherical triangle in $S^2(1)$ with angles $\alpha = \frac{2\pi}{n}, \beta = \frac{2\pi}{m}, \gamma =\frac{2\pi}{p}$. Construct the double of $T$ by identifying $T$ with itself via the identity. The resulting space $M$ is a topological sphere with $3$ conical singularities of angles $2\alpha, 2\beta, 2\gamma$. The conformal group is the dihedral group $D_6$. 
 
 \subsection{Prescribing Gaussian Curvature on the 2-sphere}

 For $\gamma = (\gamma_1, \dots \gamma_n)$, let $\dom$ be the Banach space of $\dom$ functions $u: S^2 \to \R$ considered as conformal factors of $g= e^{2u}g_\beta$ and let $\cdom$ be the Banach space of $\cdom$ functions $K$.  Our main goal is to study the image of the \textit{curvature map} $\pi$, defined to be the map $\dom \to \cdom $ sending $$u \mapsto K_g  $$
As before, if $g$ is a conformal conical metric on $(S^2, g_{+1})$, then 
 $$g = e^{2u}g_\beta = e^{2u}\rho^{2\beta} g_{+1}$$ 
 where $\rho$ is a radius function as in Definition \ref{radiusfunction}. 
The Gaussian curvature of $g$ is then 
\begin{align*}
K_g = K(e^{2u}g_\beta)=e^{-2u} (K_\beta - \Delta_\beta u)
\end{align*}
where $\Delta_\beta$ is the Laplacian with respect to the conical metric $g_\beta$ and $K_\beta$ the Gaussian curvature of $g_\beta$. One can compute 
\begin{equation}
\Delta_\beta = \rho^{-2\beta}\Delta_{+1}
\end{equation}

\begin{equation}
K_\beta = \rho^{-2\beta} (1 - \beta\Delta_{+1}\log \rho ) 
\end{equation}

Observe that the function $\beta \Delta_{+1} \log \rho$ is defined to be $\beta_i \Delta_{+1} \rho$ in a neighborhood of the cone point $p_i$ and vanishes identically away from the cone points (since $\rho \equiv 1$).

Recall that a $C^1$ map $F: \mathcal{B}_1 \to \mathcal{B}_2$ is a Fredholm map between Banach manifolds $\mathcal{B}_i$ if the differential $$D_{u}F(h) := \dfrac{d}{dt}F(u+th)\res_{t=0}= \lim_{t \to 0} \dfrac{F(u+th) - F(u)}{t}$$ is a Fredholm operator at each $u \in \mathcal{B}_i$. As it is well-known,  Fredholm maps between Banach spaces are bounded linear operators characterized by having finite-dimensional kernel and cokernel. The index of a Fredholm operator is defined as 
$$ind(F) =dim(Ker F)- dim(coKer F)$$
Moreover, the index of the Fredholm map $F$ is defined to be the index of its differential, which is independent of the choice of $u$. For more on the  theory of Fredholm maps on Banach manifolds, see \cite{elworthy, nirenberg}.

\begin{theorem}\label{fredholmmap}
Let $(\Sigma,g, \beta)$ be a conical surface with $g$ representing the divisor $\beta = \sum_{j=1}^n \beta_j p_j$. If $\gamma= (\gamma_1, \dots \gamma_n) \in \R^n$ satisfies  $\gamma_i >0$ and $\gamma_i \neq \frac{m}{\beta_j}$ for any $i,j \in (1, \dots n)$, where $m$ is an integer, then the curvature map $\pi$ is Fredholm of index 0.
\end{theorem}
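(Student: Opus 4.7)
The plan is to linearize $\pi$ at an arbitrary $u \in \dom$, identify the differential as a Fuchsian-type elliptic operator on the conical surface, invoke the weighted elliptic theory of \cite{pacard} for Fredholmness, and then compute the index by a homotopy through Fredholm operators to a model case.

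First, a direct calculation from $\pi(u) = e^{-2u}(K_\beta - \Delta_\beta u)$ yields the differential
\begin{equation}
L_u(h) := D_u\pi(h) = -e^{-2u}\Delta_\beta h - 2 K_g\, h, \qquad K_g = \pi(u).
\end{equation}
Since $\gamma_i > 0$, functions in $\dom$ are continuous on $S^2$ and vanish at the cone points, so $e^{-2u}$ is bounded above and below uniformly; combined with $K_g \in \cdom$ this shows $L_u : \dom \to \cdom$ is a bounded linear operator that depends continuously on $u$, and it is uniformly elliptic away from the cone points.

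For the Fredholm property, the zero-th order term $h \mapsto -2K_g h$ is compact as a map $\dom \hookrightarrow \cdom$ by a weighted Arzel\`a--Ascoli argument, so modulo a compact perturbation one is reduced to the principal part $-e^{-2u}\Delta_\beta$, and then, after multiplying on the range side by the isomorphism $e^{2u}$ of $\cdom$, to $-\Delta_\beta$ itself. Near a cone point $p_j$ in the coordinates of (\ref{isothermalcharts}), $\Delta_\beta$ is Fuchsian of the form $\rho^{-2\beta_j}\Delta$ plus lower-order terms; separation of variables on radial powers $\rho^{\nu}$ shows that its indicial set is exactly $\{m/\beta_j : m \in \Z\}$. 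The hypothesis $\gamma_i \neq m/\beta_j$ is therefore the standard non-resonance condition, and the weighted Fredholm theorem of \cite{pacard} gives that $\Delta_\beta : \dom \to \cdom$, hence $L_u$, is Fredholm.

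For the index I would use homotopy invariance. The family $L^t := -e^{-2tu}\Delta_\beta - 2t K_g$, $t \in [0,1]$, is a norm-continuous family of bounded operators $\dom \to \cdom$ with unchanged indicial data at each cone point, and each is Fredholm by the same argument, so $\mathrm{ind}(L_u) = \mathrm{ind}(L^0) = \mathrm{ind}(-\Delta_\beta)$. Under $\gamma_i > 0$, any element of $\ker(-\Delta_\beta)$ vanishes at each cone point and is smooth harmonic on $\Sigma^* = S^2 - \{p_1,\dots,p_n\}$, so is identically zero by the strong maximum principle; a parallel duality argument using the formal self-adjointness of $\Delta_\beta$ with respect to $g_\beta$ and the non-resonance of the dual weight shows the cokernel is also trivial, whence $\mathrm{ind}(\pi) = 0$. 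The main technical obstacle is the indicial root analysis at each cone point and the cokernel triviality: one must carefully identify $\{m/\beta_j\}$ as the indicial set of $\Delta_\beta$ relative to the ambient radius function $\rho$ (rather than the $g_\beta$-distance), and confirm that throughout the range $\gamma_i > 0$ the jumps in the index as $\gamma_i$ crosses successive indicial roots cancel out, which is cleanly handled by establishing the index at a single convenient weight and then using the dimension count coming from the rotational Fourier modes around each $p_j$.
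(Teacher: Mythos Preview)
Your linearization and the appeal to weighted conic Fredholm theory match the paper's proof; the paper cites \cite{mazzeoweiss,behr} rather than \cite{pacard}, and it computes only at $u=0$ rather than introducing your homotopy $L^t$, but these are cosmetic differences since the index of a Fredholm map is constant.

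The genuine divergence is in how index zero is obtained. The paper does \emph{not} strip off the zeroth-order term and study $-\Delta_\beta$ on its own. Instead it keeps the full linearization at $u=0$, written as $-L_\beta h = \rho^{-2\beta}(\Delta_{+1}h + a\,h)$ with $a = 2(1-\beta\,\Delta_{+1}\log\rho)$, and shows that $L_\beta$ is \emph{formally self-adjoint} for the pairing $\langle u,v\rangle = \int_{S^2} uv\,\rho^{2\beta}\,dV_{+1}$. The only analytic work is a boundary estimate: under $\gamma_i>0$ the flux terms $\int_{S_R(p_i)}(u\,\partial_\nu v - v\,\partial_\nu u)\,dS$ are $O(R^{2\gamma_i})$ and vanish as $R\to 0$, which justifies the integration by parts. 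Index zero is then read off from self-adjointness plus Fredholmness, with no separate kernel or cokernel computation.

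Your route through $-\Delta_\beta$ has a concrete gap at the cokernel step. The very same boundary computation gives $\int_{S^2}\Delta_\beta h\,dV_\beta = 0$ for every $h\in\dom$ with $\gamma_i>0$, so the range of $-\Delta_\beta$ lies in the mean-zero hyperplane; since constants belong to $\cdom$ whenever $\gamma_i<2$, this forces $\mathrm{coker}(-\Delta_\beta:\dom\to\cdom)\neq 0$, and $-\Delta_\beta$ is not an isomorphism on these spaces. Your ``parallel duality argument'' would identify the cokernel with a kernel on the \emph{dual} weight, but that weight is not positive, so the maximum-principle step you used for the kernel does not transfer. The self-adjointness you invoke is exactly the mechanism the paper uses---but the paper uses it to get the index directly, rather than to claim that kernel and cokernel are separately trivial, which they are not for the bare Laplacian in this weighted setting.
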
	

\begin{proof}
Fix $u \in \dom$. If we set $g = e^{2u}g_\beta$, then as observed above
\begin{equation}
\pi(u) = e^{-2u}(K_{\beta}- \Delta_{\beta} u)
\end{equation}

Let $h \in \dom$, thought of as the tangent space to $\dom$ at $u$. Then 
\begin{align*}
D_u\pi(h) &= \dfrac{d}{dt}\res_{t=0}\pi(u+th)\\
&= \dfrac{d}{dt}\l e^{-2(u+th)}(K_{\beta} - \Delta_{\beta}(u+th))\r\res_{t=0}\\
&= -2hK_g - e^{-2u}\Delta_{\beta}h
\end{align*}
with $K_g = e^{-2u}(K_{\beta} - \Delta_{\beta}u)$, which is the Gaussian curvature of $g$. At $u = 0$, we get $K_g = K_{\beta}$, hence 
\begin{align}
 D_0 \pi (h) &= -2hK_{\beta}- \Delta_{\beta} h \\
&=-\rho^{-2\beta}(2h (1- \beta \Delta_{+1} \log \rho)) + \Delta_{+1} h)
\end{align}

If we let $a:= 2(1 - \beta \Delta_{+1} \log \rho)$, then 
\begin{equation}\label{Lbeta}
 -\LL{\beta}(h):= D_0 \pi (h) = \rho^{-2\beta} (ah + \Delta_{+1} h)
\end{equation}
It is known that if $\gamma= (\gamma_1, \dots \gamma_n) \in \R^n$ satisfies $\gamma_i \neq \frac{m}{\beta_j}$ for any $i,j \in (1, \dots n)$, where $m$ is an integer, then the linear operator $\LL{\beta}: \wlp{k}{p}{\gamma} \to \wlp{k-2}{p}{\gamma-2}$ is Fredholm \cite{mazzeoweiss, behr}. We further claim $\LL{\beta}$ is formally self-adjoint.  Observe there is a natural inner product on $(S^2, \beta)$ given by 
\begin{equation}
\langle u, v \rangle = \int_{S^2} u\cdot v \rho^{2\beta} dV_{+1}
\end{equation}

Thus for all $u, v \in \dom$ we have 
\begin{align}
\langle v, \LL{\beta}u\rangle&= \int_{S^2} v(\rho^{-2\beta}(\Delta_{+1} u + au)) \rho^{2\beta} dV_{+1} \\
&= \int_{S^2} v(\Delta_{+1} u + au)  dV_{+1}\\
& = \int_{S^2} v (\Delta_{+1} u + au) dV_{+1}\\
& = \int_{S^2} u (\Delta_{+1} v + av) dV_{+1}\\
&=  \int_{S^2} u \rho^{-2\beta}(\Delta_{+1} v + av) \rho^{2\beta} dV_{+1}\\
&= \langle L_\beta v, u \rangle
\end{align}

The integration by parts in (3.12)-(3.13) needs some justification, so fix $R>0$ small enough and let $B_R(p_i)$ be a geodesic ball of radius $R$ around the cone point $p_i$. Let $S_R(p_i)$ denote the circle of radius $R$ with center $p_i$, $\partial_{\nu} u$ denotes the normal derivative of $u$, and $dS$ the volume element of $S_R(p_i)$. We then have
\begin{align*}
\int_{S^2- \{p_1\dots p_n\}} v \Delta_{+1} u \quad dV_{+1} & = \lim_{R\to 0}  \int_{S^2 - \cup_{i=1}^n B_R(p_i)} v\Delta_{+1} u  \quad dV_{+1}\\
&= \lim_{R\to 0} \l\int_{S^2 - \cup_{i=1}^n B_R(p_i)} u\Delta_{+1} v  \quad dV_{+1}  + \int_{\cup_{i=1}^n S_R(p_i)} u\partial_{\nu}v - v\partial_{\nu}u \quad dS \r\\
&= \int_{S^2 - \{p_1, \dots p_n\}} u\Delta_{+1} v  \quad dV_{+1} + \lim_{R\to 0} \sum_{i=1}^n \int_{S_R(p_i)} u\partial_{\nu}v - v\partial_{\nu}u \quad dS
\end{align*}

Now, since $u \in \wlp{k}{p}{\gamma}(S^2 - \cup_{i=1}^n B_R(p_i))$ for all $R>0$ small enough, it follows that $\partial_\nu u \in \wlp{k-1}{p}{\gamma-1}(S^2 - \cup_{i=1}^n B_R(p_i))$, i.e. there is a $C>0$ such that $||u||_{C^{l,\alpha}_\gamma} \leq C$. In particular, $||\partial_\nu u||_{C^{l-1, \alpha}_{\gamma-1}} \leq C$.  It follows from the definition of these norms (see 2.14) that \[\sup_{x\in S^2 - \{p_1, \dots p_n\}}\rho^{-\gamma}|u| \leq C\] and \[\sup_{x\in S^2 - \{p_1, \dots p_n\}}\rho^{-(\gamma -1)}|\partial_\nu u| \leq C_1\] 
and similarly for $v$. Therefore 
\begin{align}
\bigg|\int_{S_R(p_i)} u\partial_{\nu}v - v\partial_{\nu}u \quad dS \bigg| &\leq \int_{S_R(p_i)} |u||\partial_\nu v |dS+ \int_{S_R(p_i)} |v| |\partial_\nu u| dS\\
&= \int_{S_R(p_i)} \rho^{-\gamma}|u|\rho^{-\gamma+1}|\partial_\nu v | \rho^{2\gamma -1}dS+ \int_{S_R(p_i)} \rho^{-\gamma}|v| \rho^{-\gamma +1}|\partial_\nu u|\rho^{2\gamma -1} dS\\
&\leq 2C'\int_{S_R(p_i)} \rho^{2\gamma -1} dS\\
&\leq C(\delta) R^{2\gamma}
\end{align}

 Thus, provided $\gamma >0$, taking the limit as $R \to 0$, we see that the boundary terms disappear, as wanted. 
This concludes the proof that the map $\LL{\beta}$ is formally self-adjoint and Fredholm, from which it follows that the map $\pi$ is a Fredholm map of index $0$. 
\end{proof}

 \subsubsection{Properness of the map $\pi$}

\begin{theorem}\label{properness}
 	Let $\mathcal{C_+}$ be the subspace of $\cdom$ consisting of positive curvature functions $K$.  Define $\mathcal{U} = \pi^{-1} (\mathcal{C}_+)$. If $\gamma>0$, then the map $\pi_0: \mathcal{U} \to \mathcal{C_+}$ defined as the restriction of $\pi$ to $\mathcal{U}$ is proper. 
\end{theorem}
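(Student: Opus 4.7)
The plan is to show that any sequence $(u_i) \subset \mathcal{U}$ with $\pi(u_i) = K_i \to K$ in $\cdom$ (and $K>0$) admits a subsequence converging in $\dom$ to some $u_\infty \in \mathcal{U}$. The overall strategy follows Anderson's scheme \cite{andersonnirenberg}: first obtain precompactness of the metrics $g_i = e^{2u_i}g_\beta$ modulo diffeomorphisms via Theorem \ref{compactthm}, then upgrade those diffeomorphisms to conformal ones so one recovers convergence of the conformal factors themselves, and finally bootstrap from $\wlp{1}{\alpha}{\gamma}$ to the target $\dom$ regularity.

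First I would verify the three hypotheses of Theorem \ref{compactthm} for $(g_i)$. The curvature bound is free because $K_i \to K$ uniformly away from the cone points. For the diameter bound, the lower bound $K_i \geq k_0 > 0$ on the smooth part, together with a Bonnet--Myers argument applied to a smoothing (which only increases the curvature at cone points with $\beta_j < 0$), yields $\mathrm{diam}(g_i) \le \pi/\sqrt{k_0}$. The area is controlled by Gauss--Bonnet, $\int K_i\, dA_{g_i} = 2\pi \chi(S^2, \beta)$, combined with $k_0 \le K_i \le \Lambda$; the resulting two-sided area bound, together with the curvature bound, gives the required non-collapsing of small-ball volume via a Bishop--Gromov type argument in the smooth part and the explicit local model $e^{2\phi}|z|^{2\beta_j}|dz|^2$ near each cone point. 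Applying Theorem \ref{compactthm} then produces, after passing to a subsequence, $\wlp{2}{\alpha}{}$-diffeomorphisms $F_i: S^2 \to S^2$ and a conical limit $g'$ representing a divisor $\beta'$ with $\beta'_j \in (-1, 0]$ and $m \le n$ such that $F_i^* g_i \to g'$ in $\wlp{1}{\alpha}{\gamma}$.

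The next step is to argue that the limit in fact represents the \emph{same} divisor $\beta$ and that the $F_i$ can be taken to be conformal transformations of $g_\beta$. For the divisor, the conformal factors $u_i$ are globally in $\dom$ and the $g_i$ all represent the fixed divisor $\beta$, so up to a permutation of the $p_j$ the cone point of $g_i$ with weight $\beta_j$ must limit to a cone point of $g'$ with the same weight (otherwise the weighted $\wlp{1}{\alpha}{\gamma}$-norm would blow up somewhere). For the conformal improvement, following \cite{andersonnirenberg}, one writes $F_i$ as a quasiconformal map between $(S^2, g_\beta)$ and $(S^2, F_i^* g_\beta)$ and applies Astala's theorem \cite{astala}: since $F_i^* g_i$ and $g_i$ are in the same conformal class, the distortion of $F_i$ tends to zero, so a subsequence converges to a conformal diffeomorphism of $(S^2, g_\beta)$. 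Theorem \ref{finiteconformal} now supplies the crucial final input: $\mathrm{Conf}(S^2, g_\beta)$ is finite for $n \ge 3$, so after extracting a further subsequence we may take $F_i = F_\infty$ fixed, and absorbing $F_\infty$ into the limit gives actual convergence $g_i \to g$ in $\wlp{1}{\alpha}{\gamma}$, hence $u_i \to u$ in $\wlp{1}{\alpha}{\gamma}$ for some $u \in \wlp{1}{\alpha}{\gamma}$.

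Finally, to bootstrap to $\dom$ convergence, I would rewrite (\ref{gaussianeq}) as the elliptic equation
\begin{equation*}
\Delta_\beta u_i = K_\beta - K_i e^{2 u_i},
\end{equation*}
and view the right-hand side as a datum in $\cdom$: $K_\beta \in \cdom$, $K_i \to K$ in $\cdom$, and $u_i \to u$ in $\wlp{1}{\alpha}{\gamma}$, so $e^{2u_i}$ is uniformly controlled in the relevant weighted spaces. Applying the Schauder-type estimate for the Fredholm operator $L_\beta$ from Theorem \ref{fredholmmap} (cf.\ \cite{mazzeoweiss, behr}) of the form $\|u_i\|_{k,\alpha;\gamma} \le C(\|\Delta_\beta u_i\|_{k-2,\alpha;\gamma-2} + \|u_i\|_0)$ and iterating yields the desired uniform $\dom$ bound, whence by Arzel\`a--Ascoli and uniqueness of the limit, $u_i \to u$ in $\dom$. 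The main obstacles I expect are the rigidity argument that no cone points degenerate in the limit (Step 3) and the application of Astala's theorem in the weighted setting at the cone points, since the usual quasiconformal machinery must be reconciled with the conical local model and the weighted H\"older topology.
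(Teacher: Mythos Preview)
Your proposal follows essentially the same strategy as the paper's proof: verify the hypotheses of Theorem~\ref{compactthm}, apply it to get convergence modulo diffeomorphisms, upgrade to convergence modulo the conformal group via Astala's theorem and the arguments of \cite{andersonnirenberg}, invoke finiteness of $\mathrm{Conf}(S^2,g_\beta)$ from Theorem~\ref{finiteconformal}, and bootstrap regularity using the weighted elliptic theory of \cite{behr}. The verification of diameter, volume, and curvature bounds is also handled in the same way (Myers/Bonnet--Myers for diameter, Gauss--Bonnet for area).

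The one substantive difference is in how you identify the conformal class of the limit $g'$. You attempt to argue directly that the limiting divisor $\beta'$ coincides with $\beta$ by tracking the cone points through the limit; but the cone points of $F_i^*g_i$ sit at $F_i^{-1}(p_j)$, which are a priori moving, so the ``otherwise the weighted norm would blow up'' argument does not obviously go through and you rightly flag this as an obstacle. The paper sidesteps this issue entirely: it observes that all the $g_i$, viewed as smooth metrics on the punctured sphere $S^2\setminus\{p_1,\dots,p_n\}$, represent the \emph{same} point in the moduli space $\mathcal{M}=\mathfrak{C}/\mathrm{Diff}_+$ (which coincides with Teichm\"uller space since the mapping class group of $S^2$ is trivial), and since $\mathcal{M}$ is Hausdorff the limit modulo diffeomorphism must also lie in the conformal class of $g_\beta$. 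This Teichm\"uller argument cleanly resolves the ``no cone points degenerate'' obstacle without ever tracking the cone data explicitly: once one knows $\psi^*g_\infty = e^{2u}g_\beta$ for some diffeomorphism $\psi$, the cone structure comes along for free.
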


\begin{proof}
If $K_i \to K \in \mathcal{C}_+$, then the sequence $K_i$ is bounded in $\wlp{k-2}{\alpha}{\gamma-2}$.  In particular, $K_i$ is bounded in $C^0_{\gamma-2}$. Hence there exist a constant $K$ such that $||K_i||_{0} \leq K$ away from the cone points. 
Moreover, since the Euler characteristic is positive, we have using Gauss-Bonnet (\ref{gauss bonnet}) 
	\begin{equation}\label{n=3 volume bound}
  2\pi \chi(S^2, \beta) = \int_{S^2} K_i dvol_{g_i} \leq K_0 \cdot area(S^2, g_i)\\
	\end{equation}
so we get 
 $$area(S^2, g_i) \geq \dfrac{2\pi \chi(S^2, \beta)}{K_0}>0$$
On the other hand, Myers' theorem (\cite{petersen}) implies that the diameter of each conical surface is finite (since curvature is assumed positive). Furthermore, since the sequence $K_i$ converges in $\mathcal{C}_+$ to a positive function $K$, we can find constants $D_0, v_0$ such that 
\begin{enumerate}
	\item $vol(g_i)\geq v_0$
	\item $diam(g_i) \leq D_0$
\end{enumerate}
for all $i$. Under these bounds we can now directly apply Theorem \ref{compactthm} to conclude that there exists a sequence of diffeomorphisms  $F_i: S^2 \to S^2$ such that 
\begin{equation}
(F_i^*g_i)_{rs} \to (g_\infty)_{rs}
\end{equation}
in $\wlp{1}{\alpha}{\gamma}$, where $g_\infty$ is a conical metric on $S^2$ with $m$  conical singularities of angles $0 < \theta \leq 2\pi$. 
By passing to a subsequence if necessary, we may assume that the $F_i$ are orientation preserving. On the other hand, since $g_\infty$ is the limit of a sequence of conical metrics in the same conformal class, we claim there exists a diffeomorphism $\psi : S^2 \to S^2$ and a smooth, positive function $u:S^2 \to \R$ such that $\psi^*g =e^{2u}g_\beta$. To see this, suppose $\mathfrak{C}$ denotes the space of conformal classes on the punctured sphere $S^2 - \{p_1, \dots, p_n\}$, i.e., two smooth  (incomplete) metrics $h_1, h_2$ on $S^2 - \{p_1, \dots, p_n\}$ represent the same point in $\mathfrak{C}$ if there exists a positive smooth function $u$ such that $h_1 = e^{2u}h_2$. The group $Diff_+$ of orientation preserving diffeomorphisms of $S^2$ acts on $\mathfrak{C}$ by 
	$$(\psi, [h]) \to \psi^*[h] = [\psi^*h]$$
	Since it is not true in general that $[\psi^*h] = [h]$, we consider the moduli space $\mathcal{M} = \mathfrak{C}/Diff_+$. This space corresponds to Teichmuller space \cite{petri, mazzeoweiss}, since the mapping class group of the sphere is trivial (so every orientation preserving diffeomorphism is isotopic to the identity). Observe then that every element of the sequence $g_i$, when considered as smooth metrics on $S^2 - \{p_1, \dots p_n\}$,  corresponds to the same point in $\mathfrak{C}$ , namely the conformal class $[g_\beta]$. If $\psi_i$ is a sequence of orientation preserving diffeomorphisms, then $\psi_i^*[g_i] = [g_i]$ as elements of $\mathcal{M}$, hence the classes $\psi_i^*g_i$ and $g_\beta$ correspond to the same point in $\mathcal{M}$.  Since the topology of Teichmuller space is Hausdorff \cite{petri}, the sequence $\psi_i^*g_i$ is the constant sequence $[g_\beta] \in \mathcal{M}$. Thus the limit of the $\psi^*_ig_i$ must be in the conformal class of $g_\beta$ modulo diffeomorphisms, i.e. there exists a diffeomorphism of $S^2$ such that the limit $g$ satisfies 
	$$\psi^*g = e^{2u}{g_\beta}$$
for some positive smooth function $u$ on $S^2$, as claimed. 
In particular, there is a diffeomorphism $\Psi$ of $S^2$ such that $\Psi^*g_\infty = e^{2u}g_\beta$. After precomposing $F_i$ with $\Psi^{-1}$, we may as well assume that we have a sequence $F_i$ of diffeomorphisms of $S^2$ such that 
\begin{equation}\label{diffeo}
F_i^*(g_i) = F_i^*(e^{2u_i}g_\beta) \to e^{2u}g_\beta
\end{equation}
in $\wlp{1}{\alpha}{\gamma}$, where $u$  is some positive smooth function on $S^2$. 
As in the arguments preceding Proposition 2.5 in \cite{andersonnirenberg}, one now has that the $F_i$ converge on a subsequence to the identity modulo the action of the conformal group, i.e. there exist conformal maps $\phi_i \in Conf(S^2, g_\beta)$ such that $\phi_i^{-1}\circ F_i$ converge to the identity. It  follows from Proposition 2.5 in \cite{andersonnirenberg}  that the functions $\phi_i^*u_i$ are uniformly bounded in $\wlp{1}{\alpha}{\gamma}\cap W^{2,p}_\delta$, for $\gamma >0$ and some $\delta \leq \gamma$ (see \cite{behr} for a definition of weighted Sobolev spaces). Observe that nothing really changes in the presence of conical singularities since the diffeomorphisms $F_i$ are still quasiconformal when restricted to the punctured sphere $S^2 - \{p_1, \dots , p_n\}$ (see \cite{oli}).
 By the Arzela-Ascoli theorem, the uniform bound on the sequence $\phi_i^*u_i$ implies convergence on a subsequence to a limit in $C^{1,\alpha}_\gamma$. Moreover, since the conformal group $Conf(S^2, g_\beta)$ is finite, we actually have that $\{u_i\}$ themselves (sub)converge to a limit $u  \in C^{1, \alpha}_{\gamma}$ which satisfies 
\begin{equation}\label{definingequation}
\Delta_{g_\beta}u = K_\beta - Ke^{2u}	
\end{equation}
weakly. Given that the Gaussian curvatures $K_i$ of the metrics $g_i$ are assumed to be in $ \wlp{k-2}{\alpha}{\gamma-2}$, a bootstrapping argument using Proposition 2.7  in \cite{behr} implies $u \in \wlp{k}{\alpha}{\gamma}$. This then completes the proof that the map $\pi_0$ is proper. 
\end{proof}

\subsubsection{Degree Computations}
We conclude this section with a result providing sufficient conditions for a function $K$ to arise as the Gaussian curvature of a conformal conical metric on $S^2$ having at least three conical singularities and angles less than $2\pi$. As mentioned in the introduction, a necessary condition for the existence of a \textit{constant} curvature conical metric on $S^2$ having at least three conical singularities and angles less than $2\pi$ is 

\begin{equation} \label{troyanovcondition1}
\sum_{i\neq j} \beta_i < \beta _j,  \textit{  for all  } j
\end{equation}

In fact, Luo-Tian have shown in \cite{luotian} that if the generalized Euler characteristic is positive, then this condition is sufficient and necessary for uniqueness and existence. Under these assumptions, we can now compute the degree of the curvature map given our previous results. Recall that if $F$ is a proper Fredholm map of index $0$ between open subsets of Banach spaces, one can define its \textit{degree} by the formula 
	$$deg(F) = \sum_{x\in F^{-1}(y)} sign(D_xF)$$
where $y$ is any regular value of $F$ and the sign is $\pm$ according to whether $D_xF$ preserves or reverses orientation. By definition, $y$ is a regular value if $D_xF$ is an isomorphism for all $x\in F^{-1}(y)$. In particular, points with empty preimage are always regular values. We refer the reader to \cite{nirenberg} for more on the degree theory of Fredholm maps on Banach manifolds. 

Let $\mathcal{C} = \mathcal{C}_+ \cap C^{k-2, \alpha}_{\gamma-2}$, where $\gamma_i >0, \gamma_i \neq \frac{m}{\beta_j}$ for any $(i,j) \in (1, \dots n)$. Recall that the restrictions on $\gamma$ guarantee that the curvature map is proper and Fredholm of index $0$. We have,

\begin{theorem}
Suppose $n \geq 3$, and $\beta = \sum_{i=1}^n \beta_i p_i$ is a divisor on $S^2$ satisfying the Troyanov condition (\ref{troyanovcondition1}) and there exists $i,j,k$ distinct for which $\beta_i , \beta_j, \beta_k$ are all distinct. Assume $\chi(S^2, \beta)>0$ and let $g_\beta$ be the unique conical metric on $S^2$ representing the divisor $\beta$ of Gaussian curvature $K_\beta = 1$. Then a function $K$ on $S^2$ is the Gaussian curvature of a metric $g$ conformal to $g_\beta$ if and only if $K \in \mathcal{C}$.
\end{theorem}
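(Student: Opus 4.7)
The "only if" direction is immediate from the mapping properties: if $g = e^{2u}g_\beta$ with $u \in \dom$, then $K = \pi(u) \in \cdom$ by the formula $K = e^{-2u}(K_\beta - \Delta_\beta u)$, and positivity of $K$ gives $K \in \mathcal{C}_+ = \mathcal{C}$.

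For the "if" direction, the plan is a Smale degree argument. By Theorem~\ref{fredholmmap} and Theorem~\ref{properness}, the restriction $\pi_0: \mathcal{U} \to \mathcal{C}_+$ is a proper $C^1$ Fredholm map of index zero between open subsets of Banach spaces, and hence carries a well-defined mod-$2$ Smale degree. Since $\mathcal{C}_+$ is connected (it is convex), the degree is constant on the target, so it suffices to exhibit a single regular value at which the degree is nonzero in order to conclude that $\pi_0$ is surjective onto $\mathcal{C}_+ = \mathcal{C}$. I would compute the degree at the curvature function $K \equiv 1$. Under the hypotheses (Troyanov condition and $\chi(S^2, \beta) > 0$), the existence and uniqueness theorem of Luo--Tian~\cite{luotian} ensures that $g_\beta$ is the \emph{unique} conformal conical metric of constant curvature $1$ in the conformal class $[g_\beta]$, so $\pi_0^{-1}(1) = \{0\}$.

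It then remains to verify that $0$ is a regular point of $\pi_0$, i.e.\ that $D_0 \pi : \dom \to \cdom$ is an isomorphism. As computed in the proof of Theorem~\ref{fredholmmap}, when $K_\beta \equiv 1$ one has $D_0 \pi(h) = -\Delta_\beta h - 2h = -\LL{\beta}(h)$, and because $\pi_0$ is Fredholm of index zero and $\LL{\beta}$ is formally self-adjoint with respect to the natural weighted pairing, this reduces to showing that $\ker \LL{\beta} \cap \dom = 0$.

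The main obstacle is precisely this kernel computation, which is where the assumption of three distinct cone angles enters. The plan is to identify $\ker \LL{\beta}$ with the tangent space at the identity to the orbit of $0 \in \dom$ under the action (\ref{confactiononfactors}) of $\mathrm{Conf}(S^2, g_\beta)$ on conformal factors. One inclusion is immediate: if $\phi_t$ is a smooth path in $\mathrm{Conf}(S^2, g_\beta)$ with $\phi_0 = \mathrm{id}$, then the corresponding conformal factors $u_t = \log \eta_{\phi_t}$ satisfy $\pi(u_t) = K_\beta \circ \phi_t \equiv 1$, so $\dot{u}_0 \in \ker \LL{\beta}$. The reverse inclusion, that every weighted kernel element integrates to a genuine $1$-parameter family of conformal transformations, is the delicate step; it combines the weighted elliptic regularity of $\LL{\beta}$ (so that kernel elements are smooth on $S^2 - \mathrm{supp}(\beta)$ and have the vanishing at cone points matched by $\log \eta_\phi$ for $\phi$ fixing the cone set, given $\gamma_i > 0$) with an isotopy/flow argument lifting the infinitesimal deformation to a finite one. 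Granting this identification, the hypothesis that three of the $\beta_i$ are distinct together with Theorem~\ref{finiteconformal} gives $\mathrm{Conf}(S^2, g_\beta) = 0$, so the orbit is a point and $\ker \LL{\beta} = 0$. This yields $|\deg(\pi_0)| = 1 \ne 0$, and $\pi_0$ is surjective onto $\mathcal{C}$, completing the proof.
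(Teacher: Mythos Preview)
Your overall architecture matches the paper's: degree argument, Luo--Tian for uniqueness of the preimage of $K\equiv 1$, and then the identification of $\ker D_0\pi_0$ with infinitesimal conformal transformations to conclude regularity via Theorem~\ref{finiteconformal}. The difference is entirely in how the ``reverse inclusion'' you flag as delicate is actually proved.

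You propose to show that every $h\in\ker\LL{\beta}$ integrates to a one-parameter family in $\mathrm{Conf}(S^2,g_\beta)$ by appealing to weighted elliptic regularity plus an unspecified ``isotopy/flow argument.'' This is the gap: regularity alone tells you nothing about why an eigenfunction of $\Delta_\beta$ with eigenvalue $2$ should generate a \emph{conformal} flow rather than an arbitrary one. The paper supplies the missing mechanism, which is a Lichnerowicz--Obata argument adapted to the conical setting. One applies Bochner's formula to $|\nabla h|^2$ on the punctured sphere, uses $K_\beta\equiv 1$ and the eigenfunction equation, and integrates; the weighted integration by parts (justified exactly as in the self-adjointness computation of Theorem~\ref{fredholmmap}, using $\gamma>0$) yields the inequality $\lambda\ge 2$ for the first nonzero eigenvalue. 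When $\lambda=2$ the Cauchy--Schwarz step in Bochner becomes an equality, forcing the trace-free Hessian of $h$ to vanish, i.e.\ $\mathrm{Hess}(h)=-h\,g_\beta$, so that $\nabla h$ is a conformal Killing field. Only \emph{then} does the flow of $\nabla h$ lie in $\mathrm{Conf}(S^2,g_\beta)$, and triviality of that group forces $h=0$.

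In short, your proposal correctly locates the crux but does not resolve it; the Bochner/Obata equality case is the substantive content you need to add.
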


\begin{proof}
Suppose $K \in \mathcal{C}$. We want to show there exists a function $u$ such that $e^{2u}g_\beta$ has Gaussian curvature $K$, where $g_\beta$ is the unique conformal conical metric with Gaussian curvature $1$. The existence of such a metric is equivalent to the existence of a solution $u$ to the equation 
 \begin{equation}\label{gaussian}
K = e^{-2u}(1 - \Delta_\beta u)
 \end{equation}

In the language of this section, it is enough to show that the restriction $\pi_0$ of the curvature map  to $\pi^{-1}(\mathcal{C})$ has $deg =1$. The assumption that $K \in \mathcal{C}$ guarantees that the map $\pi_0$ is a proper Fredholm map of index $0$ (see Theorems \ref{properness}, \ref{fredholmmap}). Observe that for given $\gamma, \alpha, k$ satisfying the conditions of the theorem, the subset of $\cdom$ consisting of positive functions is convex, thus  $\mathcal{C}$ is path-connected and there is a well-defined notion of degree. Clearly the function $K=1 \in \mathcal{C}$. On the other hand, the preimage of $K=1$ under $\pi_0$ is given by all solutions to the equation 
\begin{equation}\label{gaussian1}
1 = e^{-2u}(1 - \Delta_\beta u)
 \end{equation}
By Theorem 2 in \cite{luotian} there exists a unique conical metric $g$ on $S^2$ representing the divisor $\beta$ of constant curvature $1$. Since $u=0$ is a solution, it follows that the preimage $\pi_0^{-1}(1) = \{0\}$.

Now, the kernel of the differential of the curvature map $\pi_0$ under the assumption that $g_\beta$ has Gaussian curvature $1$ is given by solutions of 
\begin{equation}\label{kernel}
 D_0 \pi (h) = -2h - \Delta_\beta h = 0
\end{equation}
We now argue that the first eigenvalue $\lambda$ of the problem 
\begin{equation}
\Delta_\beta h = -\lambda h
\end{equation}
satisfies $\lambda \geq 2$. Moreover, if the lowest possible eigenvalue is achieved, namely $\lambda=2$,  then there exists a non-constant solution to the equation 
$Hess(f) = -f g$. 
 To see this, we follow the same ideas as in the works of Lichnerowicz and Obata \cite{lich, obata} which have now become standard. Using Bochner's formula away from the cone points, we can write 
 \begin{equation}
 \frac{1}{2}\Delta_\beta |\nabla h|^2 = |Hess (h)|^2 + g_\beta(\nabla \Delta h, \nabla h) + K_\beta |\nabla h|^2
 \end{equation}
Using Scharwz inequality and the fact that $h$ is an eigenfunction we get 
\begin{equation}
|Hess (h)|^2 \geq \frac{1}{2}( \Delta h)^2  = -\dfrac{\lambda}{2} h \Delta_\beta h
\end{equation}
Combining this with Bochner's formula we get the inequality 
\begin{align}\label{bochner}
\Delta_\beta |\nabla h|^2 \geq -\frac{\lambda}{2} h \Delta_\beta h  -\lambda |\nabla h|^2 + |\nabla h|^2
\end{align}
We now claim 
\begin{equation}\label{vanishlaplace}
\int_{S^2- \{p_1, \dots p_n\}} \Delta_\beta f dvol_\beta =0
\end{equation}
holds for any function $f \in \wlp{k-1}{\alpha}{\gamma-1}$. For $R>0$ small enough, let $B_R(p_k)$ be a geodesic ball of radius $R$ centered at the cone point $p_k$. Then 
\begin{align*}
\int_{S^2- \{p_1, \dots p_n\}} \Delta_\beta f dvol_\beta =  \int_{S^2- \{p_1, \dots p_n\}} \rho^{-2\beta}\Delta_{+1} f \rho^{2\beta}dvol_{+1} &= \lim_{R\to 0} \int_{S^2 - \cup_{k=1}^n B_R(p_k)} \Delta_{+1} f dvol_{+1}  \\
&=\lim_{R\to 0} \int_{S^2 - \cup_{k=1}^n B_R(p_k)} div(\nabla f) dvol_{+1}\\
&= \lim_{R\to 0} \sum_{k=1}^n \int_{S_R(p_k)} (\nabla f \cdot \nu ) dS\\
\end{align*}
where in the last equality we have used the divergence theorem, with  $S_R$ denoting the boundary of $B_R(p_k)$, $\nu$ the normal to each boundary circle and $dS$ the area element. As before, the assumption that $f \in \wlp{k-1}{\alpha}{\gamma-1}$ implies that $\sup_{S^2- \{p_1, \dots p_n\}} \rho^{-(\gamma-1) +1}|\nabla f| \leq C$ , so that 
\begin{align*}
\int_{S_R} |\rho^{-\gamma +2}\nabla f \cdot \nu| \rho^{\gamma-2}dS &\leq C' R^\gamma
\end{align*}
Since $\gamma>0$,  taking the limit as $R \to 0$ we get the desired result in (\ref{vanishlaplace}).

Now, using (\ref{vanishlaplace}) in combination with (\ref{bochner}), we obtain the inequality 
\begin{align*}
0 = \int_{S^2- \{p_1, \dots p_n\}} \Delta_\beta |\nabla h|^2 dvol_\beta &\geq \int_{S^2 - \{p_1, \dots p_n\}} -\frac{\lambda}{2} h \Delta_\beta h - \lambda |\nabla h|^2 + |\nabla h|^2 dvol_\beta\\
& =\l\frac{\lambda}{2} -\lambda +1\r \l\int_{S^2 -\{p_1, \dots p_n\}}  |\nabla h|^2 dvol_{\beta} \r 
\end{align*}
where the integration by parts is justified as in the proof of Theorem \ref{fredholmmap}. The previous inequality then shows that 
\begin{equation}
-\frac{\lambda}{2} +1 \leq 0
\end{equation}
so that $\lambda \geq 2$. Moreover, if $\lambda =2$, then the inequalities become equality, which forces the trace free part of the Hessian of $h$ to vanish, implying that $h$ solves the equation 
\begin{equation}\label{hessianeq}
Hess(h) = \phi g
\end{equation}
One can further show that $\phi = -h$.  Observe that (\ref{hessianeq}) implies the existence of a nonconstant solution to the equation 
\begin{equation}\label{conformalkilling}
\mathcal{L}_{\nabla h} g = -h g 
\end{equation}
Now suppose $h \in Ker (D_0\pi_0)$, that is, $h$ solves $-2h =\Delta_\beta h = \rho^{-2\beta}\Delta_{+1}h$, thus $h$ is an eigenfuction corresponding to the lowest possible eigenvalue. The discussion above shows that $h$ satisfies (\ref{conformalkilling}), in other words, $\nabla h$ is a conformal Killing field on $(S^2, g, \beta)$. Equivalently, this means the locally defined flow of $\nabla h$ preserves the conformal structure. Therefore, there exists a nontrivial one-parameter group of conformal transformations. Since the conformal group $Conf(S^2, \beta)$ is trivial, we must have $h =0$. Thus $K=1$ is a regular value of the curvature map $\pi_0$ . It now follows that $deg \pi_0 = 1$, as wanted. 
\end{proof}


\begin{thebibliography}{PSSW20}

\bibitem[AC92]{AnCh}
Michael~T. Anderson and Jeff Cheeger.
\newblock {``$C^\alpha$}-compactness for manifolds with {R}icci curvature and
  injectivity radius bounded below.''
\newblock {\em J. Differential Geom.}, 35(2):265--281, 1992.

\bibitem[And90a]{1anderson1}
Michael~T. Anderson.
\newblock ``Convergence and rigidity of manifolds under {R}icci curvature bounds.''
\newblock {\em Invent. Math.}, 102(2):429--445, 1990.

\bibitem[And90b]{andersonorbifold}
Michael~T. Anderson.
\newblock ``On the topology of complete manifolds of nonnegative {R}icci
  curvature.''
\newblock {\em Topology}, 29(1):41--55, 1990.

\bibitem[And17]{andersonnirenberg}
Michael~T. Anderson.
\newblock ``The Nirenberg problem of prescribed gauss curvature on $S^{2}$.''
\newblock Preprint arXiv: 1707.02938, 2017

\bibitem[Ast94]{astala}
Kari Astala.
\newblock ``Area distortion of quasiconformal mappings.''
\newblock {\em Acta Math.}, 173(1):37--60, 1994.

\bibitem[Beh11]{behr}
Tapio Behrndt.
\newblock ``On the cauchy problem for the heat equation on riemannian manifolds
  with conical singularities."
\newblock {\em The Quarterly Journal of Mathematics}, 64, 05 2011.

\bibitem[BF42]{blancfiala}
Ch. Blanc and F.~Fiala.
\newblock ``Le type d'une surface et sa courbure totale.''
\newblock {\em Comment. Math. Helv.}, 14:230--233, 1942.

\bibitem[CGY93]{changliugursky}
Sun-Yung~A. Chang, Matthew~J. Gursky, and Paul~C. Yang.
\newblock ``The scalar curvature equation on {$2$}- and {$3$}-spheres."
\newblock {\em Calc. Var. Partial Differential Equations}, 1(2):205--229, 1993.

\bibitem[Che98]{chen}
Xiuxiong Chen.
\newblock ``Weak limits of {R}iemannian metrics in surfaces with integral
  curvature bound.''
\newblock {\em Calc. Var. Partial Differential Equations}, 6(3):189--226, 1998.

\bibitem[CL93]{changliu}
Kung~Ching Chang and Jia~Quan Liu.
\newblock ``On {N}irenberg's problem."
\newblock {\em Internat. J. Math.}, 4(1):35--58, 1993.

\bibitem[CM12]{carlotto}
Alessandro Carlotto and Andrea Malchiodi.
\newblock ``Weighted barycentric sets and singular {L}iouville equations on
  compact surfaces.''
\newblock {\em J. Funct. Anal.}, 262(2):409--450, 2012.

\bibitem[CY87]{changyang}
Sun-Yung~Alice Chang and Paul~C. Yang.
\newblock ``Prescribing {G}aussian curvature on {$S^2$}.''
\newblock {\em Acta Math.}, 159(3-4):215--259, 1987.

\bibitem[Deb20]{debin}
Cl\'{e}ment Debin.
\newblock ``A compactness theorem for surfaces with bounded curvature.''
\newblock {\em J. Inst. Math. Jussieu}, 19(2):597--645, 2020.

\bibitem[DL95]{ding}
Wei~Yue Ding and Jia~Quan Liu.
\newblock ``A note on the problem of prescribing {G}aussian curvature on
  surfaces.''
\newblock {\em Trans. Amer. Math. Soc.}, 347(3):1059--1066, 1995.

\bibitem[dMLS15]{marchis}
Francesca de~Marchis and Rafael L\'{o}pez-Soriano.
\newblock ``Existence and non existence results for the singular nirenberg
  problem.''
  \newblock Preprint: arXiv 1507.08090, 2015

\bibitem[Ere04]{eremenko}
A.~Eremenko.
\newblock ``Metrics of positive curvature with conic singularities on the sphere.''
\newblock {\em Proc. Amer. Math. Soc.}, 132(11):3349--3355, 2004.

\bibitem[ET70]{elworthy}
K.~D. Elworthy and A.~J. Tromba.
\newblock ``Degree theory on {B}anach manifolds.''
\newblock In {\em Nonlinear {F}unctional {A}nalysis ({P}roc. {S}ympos. {P}ure
  {M}ath., {V}ol. {XVIII}, {P}art 1, {C}hicago, {I}ll., 1968)}, pages 86--94.
  Amer. Math. Soc., Providence, R.I., 1970.

\bibitem[GT83]{gt}
D.~Gilbarg and N.~Trudinger.
\newblock {\em Elliptic Partial Differential Equations of Second Order, 2nd
  Edition}.
\newblock Grundlehren Series, vol. 224, Springer Verlag, Berlin, 1983.

\bibitem[Han90]{han}
Zheng-Chao Han.
\newblock ``Prescribing {G}aussian curvature on {$S^2$}.''
\newblock {\em Duke Math. J.}, 61(3):679--703, 1990.

\bibitem[HH97]{hebey}
E.~Hebey and M.~Herzlich.
\newblock ``Harmonic coordinates, harmonic radius and convergence of {R}iemannian
  manifolds.''
\newblock {\em Rend. Mat. Appl. (7)}, 17(4):569--605 (1998), 1997.

\bibitem[Ji04]{ji}
Min Ji.
\newblock ``On positive scalar curvature on {$S^2$}.''
\newblock {\em Calc. Var. Partial Differential Equations}, 19(2):165--182,
  2004.

\bibitem[KW75]{kazdanwarner1}
Jerry~L. Kazdan and F.~W. Warner.
\newblock ``Existence and conformal deformation of metrics with prescribed
  {G}aussian and scalar curvatures.''
\newblock {\em Ann. of Math. (2)}, 101:317--331, 1975.

\bibitem[Lic58]{lich}
Andr\'{e} Lichnerowicz.
\newblock {\em G\'{e}om\'{e}trie des groupes de transformations}.
\newblock Travaux et Recherches Math\'{e}matiques, III. Dunod, Paris, 1958.

\bibitem[LT92]{luotian}
Feng Luo and Gang Tian.
\newblock ``Liouville equation and spherical convex polytopes.''
\newblock {\em Proc. Amer. Math. Soc.}, 116(4):1119--1129, 1992.

\bibitem[LV73]{oli}
O.~Lehto and K.~I. Virtanen.
\newblock {\em Quasiconformal mappings in the plane}.
\newblock Springer-Verlag, New York-Heidelberg, second edition, 1973.
\newblock Translated from the German by K. W. Lucas, Die Grundlehren der
  mathematischen Wissenschaften, Band 126.

\bibitem[McO88]{mcowen}
Robert~C. McOwen.
\newblock ``Point singularities and conformal metrics on {R}iemann surfaces.''
\newblock {\em Proc. Amer. Math. Soc.}, 103(1):222--224, 1988.

\bibitem[MP19]{mondello}
Gabriele Mondello and Dmitri Panov.
\newblock ``Spherical surfaces with conical points: systole inequality and moduli
  spaces with many connected components.''
\newblock {\em Geom. Funct. Anal.}, 29(4):1110--1193, 2019.

\bibitem[MRS15]{mazzeoricci}
Rafe Mazzeo, Yanir~A. Rubinstein, and Natasa Sesum.
\newblock ``Ricci flow on surfaces with conic singularities.''
\newblock {\em Anal. PDE}, 8(4):839--882, 2015.

\bibitem[MW15]{mazzeoweiss}
Rafe Mazzeo and Hartmut Weiss.
\newblock ``Teichmüller theory for conic surfaces''
\newblock Preprint arXiv: 1509.07608, 2015


\bibitem[Nir01]{nirenberg}
Louis Nirenberg.
\newblock {\em Topics in nonlinear functional analysis}, volume~6 of {\em
  Courant Lecture Notes in Mathematics}.
\newblock New York University, Courant Institute of Mathematical Sciences, New
  York, 2001.
\newblock Chapter 6 by E. Zehnder, Notes by R. A. Artino, Revised reprint of
  the 1974 original.

\bibitem[Oba62]{obata}
Morio Obata.
\newblock ``Certain conditions for a {R}iemannian manifold to be isometric with a
  sphere.''
\newblock {\em J. Math. Soc. Japan}, 14:333--340, 1962.

\bibitem[Pac06]{pacard}
Frank Pacard.
\newblock ``Analysis in weighted spaces: preliminary version.''
\newblock {\em Teheran (Iran)}, pp.75, 3rd cycle,, 2006.
\newblock cel-00392164.

\bibitem[Pet87]{peters}
Stefan Peters.
\newblock ``Convergence of {R}iemannian manifolds.''
\newblock {\em Compositio Math.}, 62(1):3--16, 1987.

\bibitem[Pet16]{petersen}
Peter Petersen.
\newblock {\em Riemannian geometry}, volume 171 of {\em Graduate Texts in
  Mathematics}.
\newblock Springer, Cham, third edition, 2016.

\bibitem[Pet19]{petri}
Bram Petri.
\newblock {\em Teichm{\"u}ller theory, Lecture notes}, 2019.
\newblock \url{http://www.math.uni-bonn.de/people/bpetri/t_tt/Teich190704.pdf}

\bibitem[PSSW20]{song}
D.~H. Phong, Jian Song, Jacob Sturm, and Xiaowei Wang.
\newblock ``The {R}icci flow on the sphere with marked points.''
\newblock {\em J. Differential Geom.}, 114(1):117--170, 2020.

\bibitem[Ram18]{ramos}
Daniel Ramos.
\newblock ``Ricci flow on cone surfaces.''
\newblock {\em Port. Math.}, 75(1):11--65, 2018.

\bibitem[Tro91]{troyanov}
Marc Troyanov.
\newblock ``Prescribing curvature on compact surfaces with conical singularities.''
\newblock {\em Trans. Amer. Math. Soc.}, 324(2):793--821, 1991.

\bibitem[Yau75]{yau}
Shing~Tung Yau.
\newblock ``Harmonic functions on complete {R}iemannian manifolds.''
\newblock {\em Comm. Pure Appl. Math.}, 28:201--228, 1975.

\end{thebibliography}

\end{document}